\def\referencia#1#2{\begingroup
    #2%
    \def\@currentlabel{#2}%
    \phantomsection\label{#1}\endgroup
}
\newtheorem{maintheorem}{Theorem}
\newtheorem{theorem}{Theorem}[section]
\newtheorem{definition}{Definition}[section]
\newtheorem{lemma}{Lemma}[section]
\newtheorem{corollary}{Corollary}[section]
\theoremstyle{remark}
\newtheorem{remark}{Remark}
\theoremstyle{plain}
\newtheorem*{theoremC}{Theorem \AE}
\newcommand{\F}{\ensuremath{\mathcal{F}}}
\newcommand{\diam}{\mathrm{diam\, }}
\newcommand{\htop}{\mathrm{h_{top}}}
\newcommand{\dist}{\mathrm{dist\, }}
\newcommand{\Fc}{\ensuremath{\mathcal{F}^c}}
\newcommand{\Fs}{\ensuremath{\mathcal{F}^s}}
\newcommand{\Fu}{\ensuremath{\mathcal{F}^u}}
\newcommand{\Fcs}{\ensuremath{\mathcal{F}^{cs}}}
\newcommand{\Fcu}{\ensuremath{\mathcal{F}^{cu}}}
\newcommand{\uFc}{\ensuremath{\tilde{\mathcal{F}}^c}}
\newcommand{\uFs}{\ensuremath{\tilde{\mathcal{F}}^s}}
\newcommand{\uFu}{\ensuremath{\tilde{\mathcal{F}}^u}}
\newcommand{\uFcs}{\ensuremath{\tilde{\mathcal{F}}^{cs}}}
\newcommand{\uFcu}{\ensuremath{\tilde{\mathcal{F}}^{cu}}}
\newcommand{\uF}{\ensuremath{\tilde{\mathcal{F}}}}
\newcommand{\Tord}{\mathbb{T}^d}
\newcommand{\Reald}{\mathbb{R}^d}
\newcommand{\tW}{\tilde{W}}
\begin{document}

\title{Invariance of Entropy for maps isotopic to Anosov}

\author{Pablo D. Carrasco}
\address{ICEx-UFMG, Av. Ant\^{o}nio Carlos, 6627 CEP 31270-901, Belo Horizonte, Brazil}
 \email{pdcarrasco@gmail.com}

\author{Cristina Lizana}
\address{IME - Universidade Federal da Bahia. Av. Adhemar de Barros s/n, Ondina, CEP: 40170-110, Salvador-Ba. Brazil}
 \email{clizana@ufba.br}
% \thanks{\noindent $\dag$
%This work was partially supported by TWAS-CNPq and ULA}

\author{Enrique Pujals}
\address{CUNY, 365 Fifth Avenue New York, NY 10016 USA}
 \email{epujals@gc.cuny.edu}
%\thanks{\noindent $\dag$
%This work was partially supported by PROSUL }

\author{Carlos H. V\'{a}squez}
\address{Pontificia Universidad Cat\'olica de Valpara\'{\i}so, Blanco Viel 596,
Cerro Bar\'on, Valpara\'{\i}so-Chile.}
\email{carlos.vasquez@pucv.cl}
\thanks{C.H.V.  was partially supported by Proyecto Fondecyt 1171427.}

\keywords{Partial Hyperbolicity, Measures of Maximal Entropy, Derived from Anosov, Robustly Transitive Diffeomophisms}

\date{\today}

\begin{abstract}
We prove the topological entropy remains constant inside the class of partially hyperbolic diffeomorphisms of $\mathbb{T}^d$ with simple central bundle (that is, when it decomposes into one dimensional sub-bundles with controlled geometry) and such that their induced action on $H_1(\Tord)$ is hyperbolic. In absence of the simplicity condition we construct a robustly transitive counter-example.
\end{abstract}

\maketitle

%%%%%%%%%%%%%%%%%%%%%%%%%%%%%%%%%%%%%%%%%%%%%%%%%%%%%%%%%%%%%%%%%%%%%%%%%%%%%%%%%%%%%%%%%%%%%%%%%%%%%%%%%%%%%%%%%%%%%%%%%%%%%%%%%%%%%%%%%%%%%%%%%
\section{Introduction}\label{sec.introduction}

When studying the homotopy class of a given map one is often interested in finding the simplest model available. What simplicity means in a given context is of course a subjective matter, but a reasonable candidate could be the map in such a class for which the complexity of its orbits is minimal with respect to some appropriate measure. Following Mike Shub, who originally considered these type of questions \cite{Shub1974} we will express this quantitatively by  the notion of \textit{topological entropy}.  Roughly speaking, fixed an error $\epsilon>0$ and a  time $n\geq 1$, an ``experimental orbit'' is a set of points  such that  until time $n$ they are separated  up to $\epsilon$ by the iterates of $f$. The topological entropy of $f$ is the rate of growth of the number of  different ``experimental orbits''  which can be observed  when the time $n$ goes to $+\infty$ and the error $\epsilon$ goes to $0$ (see Section~\ref{ssec:entropy} for a precise definition).

We restrict ourselves to differentiable diffeomorphisms of compact manifolds. Given an isotopy class $[f]$ we can ask the following.

\medskip

\noindent\textbf{Question:} Can we characterize the minimizers in $[f]$? Can we give sufficient conditions for a map $g\in[f]$ to be a minimizer of the topological entropy? 

\medskip

Even in this setting the question appears to be very hard, and a complete answer is not known for dimension greater than one. In the case of compact surfaces we have, due to the Nielsen-Thurston classification, that given an (orientation preserving) diffeomorphism $f,$ there exists $g$ homotopic to it satisfying one of the following:

\begin{itemize}
\item $g^p$ is the identity for some $p\in\mathbb{N}$, or
\item $g$ is pseudo-Anosov, or
\item $g$ leaves invariant some finite set of closed simple curves.
\end{itemize}  
Periodic maps have zero entropy, so those are minimizers of the entropy inside their isotopy class. For maps in the class of pseudo-Anosovs, we have the following result.

\begin{theorem}[Fathi-Shub, \cite{Fathi2012}]
Let $S$ be a compact surface and $f: S\rightarrow S$ be a diffeomorphism in the isotopy class of a pseudo-Anosov $A: S \rightarrow S$. Then $\htop(f) \geq \htop(A)$. 
\end{theorem}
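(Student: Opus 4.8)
The plan is to reduce the inequality to the single estimate $\htop(f)\ge\log\lambda$, where $\lambda=\lambda(A)>1$ is the dilatation of the pseudo-Anosov map $A$. The first thing I would record is the classical identity $\htop(A)=\log\lambda$: the map $A$ admits a Markov partition by rectangles adapted to its invariant transverse measured foliations, and the associated coding map exhibits $A$ as a finite-to-one factor of a subshift of finite type whose transition matrix has Perron eigenvalue $\lambda$. Since the topological entropy of an SFT is the logarithm of the Perron eigenvalue of its transition matrix and finite-to-one factor maps preserve entropy, $\htop(A)=\log\lambda$; so everything comes down to bounding $\htop(f)$ from below by $\log\lambda$.

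Next I would show that the isotopy class of $f$ alone forces exponential stretching of curves at rate $\log\lambda$. Fix the stable measured foliation $(\F,\mu)$ of $A$, normalized so that $A_*\mu=\lambda^{-1}\mu$, and let $c$ be any essential simple closed curve on $S$; since the stable foliation of a pseudo-Anosov is filling, $i(c,\F)>0$, where $i(\cdot,\cdot)$ denotes geometric intersection number. Because $f$ is isotopic to $A$, the simple closed curve $f^{n}(c)$ is isotopic to $A^{n}(c)$, hence $i(f^{n}(c),\F)=i(A^{n}(c),\F)$; combining homeomorphism-invariance of $i$, the fact that $A^{-n}$ preserves $\F$ while multiplying $\mu$ by $\lambda^{n}$, and homogeneity of $i$ in the transverse measure yields $i(A^{n}(c),\F)=\lambda^{n}\,i(c,\F)$. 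Fixing a Riemannian metric $g$ on $S$, the $g$-density of $\mu$ is bounded, so $i(\gamma,\F)\le C_{0}\,\ell_{g}(\gamma)$ for every curve $\gamma$, and therefore
\[
\ell_{g}\bigl(f^{n}(c)\bigr)\ \ge\ \frac{1}{C_{0}}\,i\bigl(f^{n}(c),\F\bigr)\ =\ \frac{i(c,\F)}{C_{0}}\,\lambda^{n}:
\]
iterating $f$ stretches the fixed, bounded-length curve $c$ at exponential rate at least $\log\lambda$.

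The remaining step, which I expect to be the main obstacle, is to upgrade this exponential length growth into a lower bound for $\htop(f)$. If $f$ is assumed $C^{\infty}$ one can invoke Yomdin's theorem, by which $\htop(f)$ dominates the exponential growth rate of $\ell_{g}(f^{n}(c))$, and we would be done. To treat $f$ in full generality I would instead exploit more of the geometry of $\F$: following how the bounded curve $c$ threads through a fixed finite atlas of flow boxes of $\F$ under forward iteration, I would try to extract from the $\asymp\lambda^{n}$ distinct threading patterns of $f^{n}(c)$ an $(n,\varepsilon)$-separated set of cardinality $\gtrsim\lambda^{n}$ for a uniform $\varepsilon>0$. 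Equivalently, and more cleanly, one may appeal to the pseudo-Anosov forcing (``global shadowing'') theorem: there exist a compact $f$-invariant set $K\subseteq S$ and a continuous surjection $\pi\colon K\to S$ with $\pi\circ f|_{K}=A\circ\pi$, so $(S,A)$ is a topological factor of $(K,f|_{K})$ and hence $\htop(f)\ge\htop(f|_{K})\ge\htop(A)=\log\lambda$.

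To summarize, $\htop(A)=\log\lambda$ and the isotopy-invariance of the intersection numbers are essentially formal bookkeeping; the hard part will be this last conversion, that is, ruling out that the exponentially long curves $f^{n}(c)$ merely oscillate inside a small region without producing genuine dynamical complexity, and actually producing the separated sets that witness $\htop(f)\ge\log\lambda$. This is exactly the point where either smoothness (\`a la Yomdin) or a substantial forcing/shadowing input must enter.
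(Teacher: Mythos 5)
This theorem is quoted by the paper from Fathi--Shub (Expos\'e 10 of \emph{Travaux de Thurston sur les surfaces}); the paper itself contains no proof, so I compare your proposal with the standard argument. Your computation is sound as far as it goes: $\htop(A)=\log\lambda$ via the Markov partition, and the isotopy-invariant estimate $i(f^n(c),\F)=i(A^n(c),\F)=\lambda^n i(c,\F)\geq \tfrac{1}{C_0}$ times nothing --- rather, $\ell_g(f^n(c))\geq \lambda^n i(c,\F)/C_0$ --- correctly forces exponential stretching. You also correctly identify the conversion of length growth into entropy as the crux. Where your route differs from Fathi--Shub's is precisely at that bridge: their proof uses Bowen's inequality $\htop(f)\geq$ (growth rate of $f_*$ on $\pi_1(S)$), which is a homotopy invariant, together with the computation that the $\pi_1$-growth rate of a pseudo-Anosov equals $\lambda$. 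Your intersection-number estimate is in fact exactly the input that inequality needs, since $i(f^n(c),\F)$ bounds from below the length of the shortest loop in the free homotopy class $f_*^n[c]$, which by Milnor--\v{S}varc is comparable to word length in $\pi_1$; so invoking Bowen's ``Entropy and the fundamental group'' would complete your own computation without any extra machinery. Of the two bridges you propose instead, the Yomdin route is insufficient in the stated generality (it needs $C^\infty$, while the theorem is asserted for diffeomorphisms and the paper works in the $C^1$ category), whereas the Handel global-shadowing route is valid and complete --- but note that it renders your intersection-number work redundant, since the semiconjugacy $\pi\circ f|_K=A\circ\pi$ alone yields $\htop(f)\geq\htop(f|_K)\geq\htop(A)$ immediately, and Handel's theorem is a substantially heavier (and later) result than the one being proved. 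In short: correct modulo the cited forcing theorem, but the economical classical bridge you are looking for is Bowen's $\pi_1$-growth inequality.
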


The third case is reducible (one cuts the surface along the invariant curves), and can be, in principle, understood by reducing to the other cases. Thus, in dimension two, the first part of the previous question has a definitive answer; that is, given an isotopy class $[f]$ we know how to find the simplest model in terms of entropy. 

To study these type of questions and generalizations to higher dimensions we are led to consider isotopy classes containing maps whose dynamics is well understood, and we take the approach here of considering isotopy classes of Anosov diffeomorphisms. From now on $M$ is a closed (compact, boundaryless) Riemannian manifold. 

Recall that a diffeomorphism $f:M\rightarrow M$ is Anosov if it has a $Df$-invariant splitting $TM=E^s\oplus E^u$ where $E^s$ is uniformly contracted and $E^u$ uniformly expanded (see Section~\ref{ssec:ph} for a precise definition).

\begin{definition}
A $C^1$ diffeomorphism $f: M \rightarrow M$ is called \emph{Derived from Anosov} (DA) if it is isotopic to an Anosov diffeomorphism. In the case $M=\Tord,$ then $f$ is isotopic to its action in homology $A:H_1(\Tord)\rightarrow H_1(\Tord)$. We call $A$ the linear part of $f$. 
\end{definition}

A consequence of the results obtained by J. Franks (cf. Section \ref{semiconjugation}) is that if $f:\Tord\rightarrow\Tord$ is a DA having linear part $A$, then $\htop(f) \geq \htop(A)$, thus in this category we already have the existence of a simple (linear) model minimizing the entropy inside the isotopy class. It is meaningful then to ask:

\medskip

\noindent\textbf{Question:} if $f:\Tord\rightarrow\Tord$ is a DA, when is $\htop(f)=\htop(A)$?

\medskip

Let us give an illustrative example of what we are discussing.

\begin{theoremC}
Consider a DA diffeomorphism $f:\mathbb{T}^2\rightarrow\mathbb{T}^2$ and assume that there exists an isotopy $(f_t)_{t\in[0,1]}$ between $f$ and its linear part such that for every $t\in[0,1]$ the diffeomorphism $f_t$ is partially hyperbolic, that is, it preserves a dominated splitting\footnote{We say that $TM=E\oplus F$ is a dominated splitting if there exist $n$ such that for every $x\in M$ it holds $\|Df^n(x)\mid_E\|\cdot\|Df^{-n}(f^n(x))\mid _F\|\leq 1/2$.} with $\|Df_t|F_t\|>1$. Then $\htop(f) = \htop(A)$. 	
\end{theoremC}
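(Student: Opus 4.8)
The plan is to prove the two inequalities $\htop(f)\ge\htop(A)$ and $\htop(f)\le\htop(A)$ separately. The first is the content of Franks' theory recalled above: a DA of $\mathbb{T}^2$ is semiconjugate to its linear part. Writing $A$ for the hyperbolic matrix in $GL(2,\mathbb{Z})$ representing the action of $f$ on $H_1(\mathbb{T}^2)$ and $\lambda>1$ for its spectral radius, we have $\htop(A)=\log\lambda$, and the whole problem is therefore to establish $\htop(f)\le\log\lambda$.

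First I would record the structure furnished by the hypothesis. Each $f_t$ carries a dominated splitting $E_t\oplus F_t$ with $\dim E_t=\dim F_t=1$; since $\|Df_t|_{F_t}\|>1$ pointwise on the compact manifold $\mathbb{T}^2$, the bundle $F_t$ is uniformly expanded, hence uniquely integrable to an $f_t$-invariant foliation $\Fu_t$ with $C^1$ leaves, which coincides with the linear unstable foliation of $A$ when $t=1$. Lifting to the universal cover $\mathbb{R}^2$ and using $f\simeq A$, one produces a continuous surjection $\tilde h\colon\mathbb{R}^2\to\mathbb{R}^2$ with $\tilde h-\mathrm{id}$ bounded, equivariant under deck transformations and satisfying $\tilde h\circ\tilde f=A\circ\tilde h$; it descends to the semiconjugacy $h\colon\mathbb{T}^2\to\mathbb{T}^2$.

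The core step is to show that $\tilde h$ carries each leaf of $\uFu$ homeomorphically onto a leaf of the linear unstable foliation of $A$ (a line in the expanding eigendirection), with distortion uniformly bounded at large scales — equivalently, that the leaves of $\uFu$ are uniformly quasi-isometrically embedded in $\mathbb{R}^2$, i.e. that $\uFu$ has a global product structure with the linear unstable foliation. That $\tilde h$ maps each leaf into a single unstable line of $A$ is immediate from $\tilde h\tilde f=A\tilde h$ together with the backward contraction of $\uFu$ (the $A$-stable component of the image of a leaf, read in eigencoordinates, must stay bounded); surjectivity onto the whole line, and the fact that $h$ collapses no unstable arc — whence $\tilde h$ is injective along leaves — then follow once the quasi-isometry is available, since a collapsed unstable arc would have arbitrarily long forward iterates sitting inside a set of bounded diameter. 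To obtain the quasi-isometry for $f$ itself I would argue by propagation along the isotopy: the property holds at $t=1$, and, thanks to domination, it is open and closed in $t$ (a uniform transversality/coherence estimate persists under $C^1$-small perturbations inside the partially hyperbolic class and passes to limits), so connectedness of $[0,1]$ transports it to $t=0$.

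With the leafwise picture in hand, each fibre $h^{-1}(y)$ is a uniform transversal to $\Fu$ of uniformly bounded diameter, and its images $f^n(h^{-1}(y))=h^{-1}(A^ny)$ are again such transversals; as the transverse (``centre'') size of these fibres never grows, one gets $\htop(f,h^{-1}(y))=0$ for every $y$, and Bowen's inequality $\htop(f)\le\htop(A)+\sup_y\htop(f,h^{-1}(y))$ yields $\htop(f)\le\htop(A)$, closing the argument. (Equivalently, one may identify $\htop(f)$ with the exponential growth rate of the length of unstable plaques, which by the bounded-distortion identification with the linear unstable foliation equals $\log\lambda$.) I expect the genuine obstacle to lie in the core step: since $f$ is only $C^1$, Yomdin-type estimates are unavailable, and without the isotopy the unstable foliation of a DA of $\mathbb{T}^2$ may a priori be badly distorted, so partial hyperbolicity all along the isotopy — the two-dimensional avatar of the ``simple central bundle with controlled geometry'' of the main theorem — is precisely what allows one to import the good geometry of the linear model. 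A secondary point requiring care is verifying that the fibres carry no Bowen entropy exactly when $E$ is not uniformly contracted, that is, when $f$ is genuinely non-Anosov.
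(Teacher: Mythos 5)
Your architecture is the same as the paper's: Franks' semiconjugacy plus Bowen's inequality \eqref{eq:Bowen} reduce the problem to showing $\htop\bigl(f,h^{-1}(y)\bigr)=0$ for every $y$, and the substance is that the fibres are one-dimensional objects transverse to $\Fu$ whose forward iterates stay uniformly bounded. The difference lies in how the controlled geometry is obtained. The paper does not propagate anything along the isotopy: it invokes \cite{Potrie15} for the integrability of $E^c$ to a foliation by lines (Poincar\'e--Bendixson rules out singularities and circle leaves) together with quasi-isometry and global product structure between $\uFu$ and $\uFc$, and then runs the argument of Theorem \ref{teo:A} to conclude that each fibre is an interval inside a single centre leaf whose iterates have uniformly bounded length, at which point the linear counting bound \eqref{unifN} kills the entropy. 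You instead centre everything on the unstable foliation and propose to obtain its quasi-isometry by an open-and-closed argument in the isotopy parameter $t$.

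That open-and-closed step is the one genuine gap. Closedness is plausible, but \emph{openness} of GPS or of quasi-isometry with uniform constants under $C^1$ perturbation is precisely the delicacy the paper itself flags (``the requirement of GPS on the foliations is not in principle an open condition''); it is not a soft transversality estimate, and in dimension $2$ it is supplied not by perturbation but by the global theory of foliations of $\mathbb{T}^2$ without compact leaves or Reeb components, which is what \cite{Potrie15} provides and applies directly to $f=f_0$ without any continuation argument. A secondary imprecision: knowing that $h^{-1}(y)$ meets each unstable leaf at most once and has uniformly bounded diameter is not by itself enough to conclude zero entropy (a planar set of bounded diameter can carry entropy); one needs the fibre to be an interval in a single centre leaf with all iterates of uniformly bounded intrinsic length, which is exactly what the integrability of $E^c$ together with the GPS arguments of Lemmas \ref{dentrohoja} and \ref{intervalos} deliver before \eqref{unifN} can be applied.
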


To avoid introducing more notation at this stage we relegate the (simple) proof of the above result to the Appendix. We will be concerned with some generalizations of this theorem to higher dimensions. Concretely, we  study partially hyperbolic DAs; we do so since this class is currently focus of very active research, and since it enjoys a fair amount of meaningful examples. Let us recall the definition of partial hyperbolicity.

\begin{definition}
A $C^1$ diffeomorphism  $f: M \rightarrow  M$\ is \emph{partially hyperbolic} if it has an invariant splitting of the form $TM=E^s\oplus E^c\oplus E^u$ where $E^u$ (resp. $E^s$) is uniformly expanded (resp. contracted), and $E^s\oplus E^c,E^c\oplus E^u$ are dominated.
\end{definition}

The bundles $E^s,E^u,E^c$\ are called the \emph{stable, unstable} and \emph{center} bundle respectively. Further details are given in Section \ref{ssec:ph}. The reader can also consult \cite{CHHU2018, HPsurvey, RH2Ures} for surveys in partial hyperbolicity.

An important example of DA was introduced by Ma\~{n}\'{e} in \cite{contributions}; it is obtained by deforming a fixed point of a linear Anosov $A:\mathbb{T}^3\rightarrow \mathbb{T}^3$. The resulting system is partially hyperbolic with one dimensional center bundle. In \cite{BFSV} it is shown that the deformed map has the same entropy than $A$. See also \cite{ures}, and \cite{RRoldan} for a more precise description in this context.

In the present article we consider the case of a partially hyperbolic DA $f:\Tord\rightarrow \Tord$ whose center bundle $E^c$ has arbitrary dimension, and, as it is usually the case for the examples, integrates to an invariant foliation $\Fc$ (called the center foliation). This apparent mild generalization in the conditions conduces in fact to much more intricate possibilities; while in the one-dimensional case the induced action on the center foliation corresponds to families of maps on lines, in our case we have to deal with maps on higher dimensional manifolds, which are much more delicate.

To obtain some positive results we furthermore assume some control in the geometry of the stable and unstable foliations $\Fs,\Fu$ (that these exist is consequence of the well known Stable Manifold Theorem) and of the center bundle as well, in particular we will be working in the usual setting when $E^{cs}=E^s\oplus E^c, E^{cu}=E^u\oplus E^c$ integrate to $f$-invariant foliations $\Fcs,\Fcu$ (the so called \emph{dynamically coherent case}). We will need the following definition. 

\begin{definition}
Let $N$ be a (necessarily non-closed) manifold and let $\mathcal{F}_1,\mathcal{F}_2$ be foliations of $N$ such that $T\mathcal{F}_1\oplus T\mathcal{F}_2=TN$. We say that $\mathcal{F}_1,\mathcal{F}_2$ have Global Product Structure (GPS) if 
\[
	x,y\in N\Rightarrow \#\mathcal{F}_1(x)\cap \mathcal{F}_2(y)=1.
\]
\end{definition}

The control assumed in $E^c$ is the following.

\begin{definition}\label{def.simple}
Let $f$ be a partially hyperbolic diffeomorphism. We say that its center bundle $E^c$ is simple if
 \begin{enumerate}
 \item[a)] $E^c=E^1\oplus\cdots \oplus E^\ell$ with $\dim E^i=1$, for every $i=1,\ldots, \ell$.
 \item[b)] For every $S\subset \{1,\cdots,\ell\}$ the bundle $E^S:=\oplus_{i\in S}E^i$ integrates to an $f$-invariant foliation $\mathcal{F}^S$ (in particular, $E^c=E^{\{1,\cdots,\ell\}}$ is integrable). Furthermore, there is compatibility in the sense: $S\subset S'\Rightarrow \mathcal{F}^S$ sub-foliates $\mathcal{F}^{S'}$.
\end{enumerate}
 We say that $E^c$ is strongly simple if it is simple and furthermore
 \begin{enumerate}
  \item[c)] For every $i$, the lifts of $\mathcal{F}^i:=\mathcal{F}^{\{i\}}, \mathcal{F}^{\{1,\cdots,\hat{i},\cdots,\ell\}}$ to the universal covering of $M$ have GPS inside each leaf of the lift of $\Fc$.
 \end{enumerate}
\end{definition}

We remark that in the above definition we are not requiring domination in the center bundle. For reference, we say the decomposition in $E^c$ (simple) is dominated if for any $i$, $\bigoplus_{j=1}^i E^j$ is dominated by $\bigoplus_{j=i+1}^{\ell}E^j$: in this case we write $E^c=E^1\oplus_<\cdots \oplus_< E^\ell$.

\smallskip

We state our first result giving some sufficient conditions providing an answer to the question above. 

\begin{maintheorem}\label{teo:A}
Let $f:\Tord\rightarrow \Tord$ be a DA partially hyperbolic diffeomorphism. Assume further that:
\begin{enumerate}
\item the lifts of $\Fcs,\Fu$ to $\mathbb{R}^d$ have GPS, and likewise for $\Fs, \Fcu$;
\item $E^c$ is strongly simple.
\end{enumerate}
Then $\htop(f)=\htop(A)$. If furthermore $E^c$ is dominated then the same is true for $\mathcal{C}^1$ small perturbations $g$ of $f$, provided that $g$ has simple center bundle.
\end{maintheorem}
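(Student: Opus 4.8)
The plan is to establish the equality $\htop(f)=\htop(A)$ through the Franks semiconjugation $h:\Tord\to\Tord$ with $h\circ f = A\circ h$ (this exists for any DA and is a consequence of Franks' work cited in Section~\ref{semiconjugation}), and to show that under the hypotheses the fibers $h^{-1}(x)$ carry no entropy. Since $\htop(f)\geq\htop(A)$ always holds, it suffices to prove $\htop(f)\leq\htop(A)$. By the Ledrappier--Walters variational principle for the fibered entropy, or more directly by Bowen's inequality $\htop(f)\leq \htop(A)+\sup_{x}\htop\big(f, h^{-1}(x)\big)$, the problem reduces to showing that the topological entropy of $f$ restricted to each fiber of the semiconjugacy vanishes. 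The first key step is therefore to identify the geometry of the fibers: I would argue that each fiber $h^{-1}(x)$ is contained in a single leaf of $\Fc$ (equivalently, that $h$ is injective along stable, unstable, and their joint directions). The GPS hypothesis (1) — that $\Fcs,\Fu$ have global product structure in $\Reald$, and likewise $\Fs,\Fcu$ — is exactly what forces collapsing to happen only inside center leaves: lifting $f$ and $h$ to $\Reald$, the leaf conjugacy structure together with the expansion/contraction shows $\tilde h$ identifies two points only if they lie on a common center leaf, and the dynamics on center-stable/center-unstable leaves is orbit-equivalent (via GPS) to the corresponding linear foliation.

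The second and main step is to show that $f$ restricted to a center leaf, modulo the identifications made by $h$, has zero entropy. This is where strong simplicity of $E^c$ enters. Writing $E^c=E^1\oplus\cdots\oplus E^\ell$ with each $\mathcal{F}^S$ integrable and compatible, I would proceed by induction on $\ell$, peeling off one one-dimensional sub-foliation at a time. The GPS condition (c) inside center leaves — that $\widetilde{\mathcal{F}^i}$ and $\widetilde{\mathcal{F}^{\{1,\dots,\hat\imath,\dots,\ell\}}}$ have global product structure in each lifted center leaf — provides the fibration of a center leaf by a one-dimensional foliation over an $(\ell-1)$-dimensional one, with a genuine product structure on the universal cover. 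On a one-dimensional leaf the induced dynamics is (up to the semiconjugacy collapsing wandering intervals, exactly as in the $d=2$ model of Theorem~\AE{} relegated to the Appendix) a map of the line whose non-wandering behavior is trivial, hence contributes no entropy; then one uses the product structure and the subadditivity of entropy under such skew-product-like decompositions to conclude that the whole center leaf contributes nothing. Assembling the pieces via the Bowen inequality gives $\htop(f)\le\htop(A)$.

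The main obstacle, I expect, is the second step: controlling the fiberwise entropy along higher-dimensional center leaves. Unlike the one-dimensional center case treated in \cite{BFSV}, here the induced maps on center leaves are maps of higher-dimensional manifolds, and there is no domination assumed within $E^c$, so one cannot directly invoke normal hyperbolicity of the sub-foliations or standard Pesin-type arguments. The strong simplicity hypothesis is tailored precisely to circumvent this: the compatible tower of invariant sub-foliations with leafwise GPS lets one reduce, inductively, to one-dimensional dynamics where the Franks semiconjugacy provably collapses all the entropy. Making the induction rigorous — in particular showing that the semiconjugacy $h$ respects each intermediate sub-foliation $\mathcal{F}^S$ and that the quotient dynamics on $\mathcal{F}^{\{1,\dots,i\}}/\mathcal{F}^{\{1,\dots,i-1\}}$ is again semiconjugate to a linear (hence one-dimensional, zero-entropy-fiber) model — is the technical heart of the argument.

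For the perturbative statement, once $E^c$ is dominated the splitting $E^1\oplus_<\cdots\oplus_< E^\ell$ and the partially hyperbolic structure persist under $\mathcal{C}^1$-small perturbations $g$; normal hyperbolicity of the intermediate foliations guarantees that the $\mathcal{F}^S$ survive (so $g$ has simple center bundle automatically in many cases, but we assume it), and the GPS conditions (1) and (c), being open conditions on the leaf geometry in $\Reald$ for dynamically coherent systems, are inherited by $g$. Thus $g$ satisfies the hypotheses of the theorem, and since its linear part is still $A$, we get $\htop(g)=\htop(A)=\htop(f)$, proving invariance of the entropy in a $\mathcal{C}^1$ neighborhood.
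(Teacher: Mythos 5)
Your global architecture matches the paper's: Franks' semiconjugacy, Bowen's inequality reducing everything to $\sup_x\htop(f,h^{-1}(x))=0$, hypothesis (1) forcing each fiber into a single center leaf, and an induction over the one-dimensional sub-foliations of $E^c$. But the core step --- why the fibers carry no entropy --- is where your proposal has a genuine gap. A fiber $h^{-1}(x)$ is not $f$-invariant (indeed $f(h^{-1}(x))=h^{-1}(Ax)$), so there is no ``induced dynamics on a center leaf modulo the identifications made by $h$'', no quotient dynamical system on $\mathcal{F}^{\{1,\dots,i\}}/\mathcal{F}^{\{1,\dots,i-1\}}$, and no skew product to which subadditivity of entropy could be applied; likewise ``trivial non-wandering behavior'' of a line map is not the relevant mechanism, since one must control spanning sets of a moving, non-invariant set. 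What is actually needed, and what the paper supplies, is: (i) the fibers are \emph{rectangles} --- if $z,z'\in h^{-1}(x)$ lie on a common $\mathcal{F}^i$-leaf then the whole arc $[z,z']_i$ lies in the fiber, and the fiber is swept out by $\mathcal{F}^i$-holonomies (this uses the leafwise GPS of condition (c) together with the hyperbolicity of $A$, and is entirely absent from your outline); (ii) $\diam\big(f^n h^{-1}(x)\big)\leq K$ for \emph{all} $n$, so each one-dimensional arc in the rectangle has uniformly bounded length under every forward iterate, giving the polynomial bound $N(n,\epsilon,I)\leq n\left(\frac{L}{\epsilon}+1\right)$; (iii) a direct covering/counting induction on the dimension of the rectangle (the ``bounded geometry'' theorem of Section~\ref{sec:BoundedGeom}), which tracks the whole forward orbit $\bigcup_n f^n(h^{-1}(x))$ and combines spanning sets of the $(k-1)$-dimensional base with the linear estimate along the last one-dimensional direction. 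Without (i) and (ii) your induction has nothing to run on, and you yourself flag this step as unproved.

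A second, smaller error concerns the perturbative statement: you assert that the GPS conditions, ``being open conditions on the leaf geometry in $\Reald$'', are inherited by $g$. They are not, and the paper explicitly warns that GPS is not an open condition. The correct argument is that one only needs the \emph{bounded-scale} consequences of GPS --- a uniform bound on $\dist\big(z,\tW^{cs}_g(z)\cap\tW^u_g(z')\big)$ for $\dist(z,z')\leq K$, where $K$ bounds $\diam[x]_g$ --- and these persist under $\mathcal{C}^1$ perturbation by continuity of plaques; domination of $E^c$ is then what guarantees that the one-dimensional center sub-foliations vary continuously with $g$, so that the rectangle structure of the fibers of $g$ survives.
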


Theorem \ref{teo:A} above is an extension of the main product of \cite{BFSV}; it will be consequence from a more general result, Theorem \ref{boundedgeo}, that we will state and prove in Section \ref{sec:BoundedGeom}. The hypotheses $(1)$ of GPS (as well as dynamical coherence) holds for example in the connected component of $A$ inside the set of partially hyperbolic diffeomorphisms of $\Tord$; see \cite{Fisher2014}. We remark however that here we are not assuming \emph{absolute partial hyperbolicity} (see the comments in the above cited article), neither are we assuming domination for $E^c$, a common working hypothesis when dealing with center bundles that split into (invariant) one-dimensional ones. We also point out that the fact that $E^c$ decomposes into one-dimensional sub-bundles does not imply in general that any of them is hyperbolic (thus preventing reductions to the case when the center is one-dimensional treated in the previous literature): see Theorem D in \cite{Bonatti2003} for open sets of transitive diffeomorphisms having this property. GPS in the center is the strongest requirement here: it holds for example if $E^c_A$ is simple and these foliations have continuations in the isotopy between $A$ and $f$.

Let us bring to the attention of the reader a subtle part in the previous theorem; the requirement of GPS on the foliations is not in principle an open condition. Nonetheless, the conclusion of the Theorem is open: even though it is not assumed GPS between $\Fcs_g,\Fu_g$ (or $\Fs_g, \Fcu_g)$, small perturbations $g$ of $f$ are also partially hyperbolic DA's with the same linear part; in the case when we assume further dominated splitting in the center of $f$ the same holds for $g$, and we can then conclude that the topological entropy remains constant provided that the center of $g$ has also a decomposition into one-dimensional foliations. This last geometrical requirement is necessary, since in principle these one-dimensional foliations are not dynamically defined.
   
\smallskip

It is worthwhile to analyze the necessity of the hypotheses, in particular of the one referring to the simplicity of the central bundle. This hypothesis turns out to be crucial.

\begin{maintheorem}\label{teo:B}
There exist $g:\mathbb{T}^4\rightarrow\mathbb{T}^4$ a partially hyperbolic DA with irreducible linear part $A$, $\mathcal{U}$ a $C^1$ neighborhood of $g$ and a positive constant $c$ such that for every $g'\in \mathcal{U}$ it holds
\begin{enumerate}
\item the lifts of $\Fcs_{g'},\Fu_{g'}$ to $\mathbb{R}^d$ have GPS, and likewise for $\Fs_{g'}, \Fcu_{g'}$;
\item $\htop(g')\geq\htop(A)+c$; and,
\item $g'$ is transitive. 
\end{enumerate}
\end{maintheorem}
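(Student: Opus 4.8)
The plan is to obtain $g$ by a localized surgery on a carefully chosen linear automorphism, arranging that the fiber of the Franks semiconjugacy over a fixed point already contains a horseshoe whose entropy exceeds $\htop(A)$. First I would fix an irreducible hyperbolic $A\in SL(4,\mathbb{Z})$ whose eigenvalues are two irrational reals $0<\sigma<1<\lambda$ together with a complex pair $\mu e^{\pm i\theta}$ with $\mu<1$; such $A$ arise as companion matrices of monic integral quartics with constant term $1$, irreducibility being forced by requiring $\sigma,\lambda$ irrational and $\sigma+\lambda\notin\mathbb{Q}$. Declaring $E^s$ the $\sigma$-line, $E^c$ the $\mu e^{\pm i\theta}$-plane and $E^u$ the $\lambda$-line makes $A$ partially hyperbolic with $\dim E^c=2$, and $\htop(A)=\log\lambda$ since $\lambda$ is the only eigenvalue of modulus $>1$. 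Because $A$ rotates $E^c$ by $\theta$, the central bundle carries no continuous invariant line field, so $E^c$ is not simple; this will be inherited by $g$ and is precisely why Theorem \ref{teo:A} cannot apply.

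\emph{The surgery.} Let $p=0$ be the fixed point and $B=B_\rho(p)$ a small ball. I would take $g$ equal to $A$ outside $B$ and modified inside so that it is a diffeomorphism isotopic to $A$ (hence DA with linear part $A$), remains partially hyperbolic for the continued splitting $E^s\oplus E^c\oplus E^u$, and possesses inside the local center--unstable manifold a uniformly hyperbolic horseshoe $\Omega\ni p$ with $g|_\Omega$ conjugate to the full shift on $m$ symbols. The idea is to fold a thin $cu$-box across itself $m$ times using the strong $E^u$-expansion supplied by $A$ (stretched by an extra factor $\Lambda/\lambda>1$ near $\Omega$), the folding being absorbed by a strong simultaneous contraction of $E^s$ and $E^c$ near $\Omega$: this is what lets the folded box close up, and, since the new stable rate on $\Omega$ is far below the central rates while the central rates stay well below $\Lambda$, the domination $E^s\prec E^c\prec E^u$ survives. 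All stretching and squeezing is confined to $B$ and interpolated back to $DA$ on $\partial B$. Taking $m>\lambda$ (possible since $\Lambda$ is not bounded from above) makes $\log m>\htop(A)$. Two further design requirements are imposed: $(\mathrm{i})$ the surgery is performed inside the leaves of $\Fcu_A$, so that $\Fcu_g=\Fcu_A$ and, patching the linear $\Fcs_A$ outside $B$ with the invariant lamination of $\Omega$ inside, $g$ is dynamically coherent; $(\mathrm{ii})$ $\Omega$ is a blender in the sense of Bonatti--D\'iaz, which is compatible with the above since the covering property required of its contracting generators is met by taking $m$ large relative to the contraction.

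\emph{Verification.} Let $h_{g'}$ denote the Franks semiconjugacy $h_{g'}\circ g'=A\circ h_{g'}$ with $\tilde h_{g'}$ at bounded distance from the identity, normalized so $\tilde h_{g'}(0)=0$. Every $g'$ in a small $C^1$-neighbourhood $\mathcal{U}$ of $g$ is still DA with linear part $A$ (the action on $H_1(\mathbb{T}^4)$ is locally constant), still partially hyperbolic, still has a hyperbolic fixed point $p'$ near $p$, and still has the continuation $\Omega_{g'}$ of $\Omega$ with $g'|_{\Omega_{g'}}$ conjugate to the full $m$-shift (structural stability of basic sets). Since the $g'$-orbits of points of $\Omega_{g'}$ stay in a small ball, their lifts have bounded $\tilde g'$-orbits; applying $\tilde h_{g'}$ and using that $0$ is the only point with bounded $\tilde A$-orbit gives $h_{g'}(\Omega_{g'})=\{p'\}$ with $h_{g'}(p')=0$. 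Hence $\htop(g')\ge\htop(g'|_{\Omega_{g'}})=\log m=\htop(A)+c$ with $c:=\log m-\log\lambda>0$, giving (2). For (1): in the universal cover $\tilde g-\tilde A$ is bounded (the surgery is $\mathbb{Z}^4$-periodic, supported in balls of radius $\rho$), so the leaves of $\Fcs_{g'},\Fu_{g'}$ (resp.\ $\Fs_{g'},\Fcu_{g'}$) stay at bounded Hausdorff distance from the corresponding linear foliations, which have GPS; with dynamical coherence this yields GPS for every $g'\in\mathcal{U}$ after shrinking $\mathcal{U}$. For (3): $W^{uu}(p)$ and $W^{ss}(p)$ are dense in $\mathbb{T}^4$ (outside $B$ they follow the linear strong foliations of $A$, which are dense because the $\lambda$- and $\sigma$-directions are irrational), and $\Omega_{g'}$ is a blender; the now-standard argument of Bonatti--D\'iaz combining the robust covering property of the blender with the density of the invariant manifolds of $p'$ shows that every $g'\in\mathcal{U}$ is transitive.

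\emph{Main obstacle.} The delicate part is the surgery: one must create a horseshoe with $\log m>\htop(A)$ in the center--unstable direction while keeping the full three-term domination $E^s\prec E^c\prec E^u$ globally --- so the violent stretching and squeezing performed inside $B$ must respect the relevant cone fields everywhere, not just on $\Omega$, in particular throughout the transition annulus where $g$ interpolates to $A$ --- and simultaneously keep dynamical coherence (so that hypothesis (1) of Theorem \ref{teo:A} holds) and the blender property (for transitivity). Balancing the $E^u$-expansion needed for the extra entropy and for the covering property against the contraction of $E^s$ and $E^c$ that domination allows, within the bounded room available in $B$, is where the real work lies; it is also where the choice $\mu<1$ --- which makes $\htop(A)=\log\lambda$ as small as the hyperbolicity of $A$ permits, and which makes the index of $p$ consistent between $A$ and $g$ --- is used.
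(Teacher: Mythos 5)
The fatal problem is where you put the extra entropy. You build the horseshoe $\Omega$ inside a $cu$-box near the fixed point, with the $m$-fold crossing produced by the (boosted) expansion along $E^u$ and with $m>\lambda$ so that $\log m>\htop(A)$. This is geometrically impossible for a partially hyperbolic $g$ whose unstable cone field is a narrow cone around the fixed linear direction $E^u_A$ --- and some such cone field must be preserved if $g$ is to remain partially hyperbolic with the continued splitting, as you require. In the universal cover any curve tangent to such a cone is a monotone graph over the $E^u_A$-axis, so the image of an unstable segment can never fold back across the box; equivalently, two lifted points $\tilde x\neq\tilde y$ on the same strong unstable leaf satisfy $\|\tilde g^n\tilde x-\tilde g^n\tilde y\|\geq c\,\lambda_u^n\,\dist_u(x,y)\to\infty$, whereas all points of $\Omega$ have lifted orbits confined to a bounded neighborhood of the lifted fixed point (this boundedness is exactly what you invoke to conclude $h(\Omega)=\{0\}$). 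Hence $\Omega$ cannot contain two distinct points on one strong unstable leaf, and no full shift on $m\geq 2$ symbols whose unstable direction is $E^u$ can sit inside the small ball $B$. The same tension appears from the other side: you assert GPS for $\uFcs,\uFu$ in item (1), and Lemma \ref{dentrohoja} shows that GPS forces every fiber $h^{-1}(x)$ --- in particular the one containing $\Omega$ --- into a single center leaf, which is transverse to $E^u$. So the two halves of your verification contradict each other, and the surgery as described cannot be carried out.

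The paper circumvents this by putting the horseshoe entirely inside the two-dimensional contracting center: a Smale horseshoe of entropy $\log 2$ is inserted in $W^c(p)$ \emph{\`a la} Tahzibi--Bronzi, with its internal rates kept strictly between the stable and unstable rates so that the global splitting survives; folding is possible there because the center plaques are two-dimensional and the center bundle carries no invariant one-dimensional cone. The extra entropy then \emph{adds to}, rather than replaces, the unstable contribution: combining $(n,\epsilon)$-separated sets of the center horseshoe with $(n,\epsilon)$-separated sets along local unstable leaves yields $\htop(f)\geq\log|\lambda^u_A|+\log 2$, so one takes $c=\log 2$ and never needs a crossing number exceeding $\lambda$. (Transitivity is then obtained by Ma\~n\'e's keep-away argument, using minimality of $W^c=W^c_A$ and the fact that $Df^{-1}|E^c$ expands outside the perturbation ball, rather than via blenders; your choice of an $A$ that already has a complex central pair is a harmless simplification of the paper's two-step deformation, but it does not save the entropy mechanism.) To repair your argument the shift must live in the center leaf and the entropy bound must be made additive.
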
  

In other words, the topological entropy jumps in a robust way for a class of maps which are indecomposable in dynamical terms. It is possible to give simpler examples without the transitivity condition, although from the dynamical point of view these look somewhat artificial. On the other hand here we do not consider the measure-theoretical implications for these type of systems, and invite the interested reader to consider this problem. For results in the $3$-dimensional case see \cite{measurecenter}. 

The example in Theorem \ref{teo:B} is constructed by deforming a hyperbolic linear map, thus as we mentioned before, condition $(1)$ of Theorem \ref{teo:A} are satisfied. The key-fact that is lacking is the decomposition into one-dimensional sub-bundles; although we are requiring more for the proof of Theorem \ref{teo:A} (in particular, GPS inside the center leaves), it is possible that one can establish Theorem \ref{teo:A} only assuming the splitting of the center into one-dimensional sub-bundles. 

\medskip

\noindent\textbf{Question:} Does Theorem \ref{teo:A} hold assuming simplicity of the center bundle, or just that $E^c$ decomposes into invariant one-dimensional sub-bundles?

\smallskip
	
Compare \cite{Fisher2010} where the authors establish the existence of entropy maximizing measures for partially hyperbolic diffeomorphisms such that their center bundle splits into invariant one-dimensional sub-bundles. Some of the techniques of that article may be useful to answer the question above.

For partially hyperbolic DA's with two-dimensional center dimension (say, in the connected component of $A$ inside the partially hyperbolic ones with whose center is sub-foliated by one-dimensional ones) the results of this article cover essentially all dynamical interesting possibilities, except for the parabolic type behavior. One can ask the following.

\noindent\textbf{Question:} Assume that $f$ is deformation of $A$ inside the set of partially hyperbolic systems with two-dimensional center foliation $E^c$, and assume further that there exists $E\subset E^c$ one-dimensional invariant sub-bundle. Is $\htop(f)=\htop(A)$?

\smallskip 

The remainder of the article is organized as follows. In the next section we discuss some necessary preliminaries in entropy, partially hyperbolic dynamics and foliations. A very well known theorem due to J. Franks states the existence of a map $h:\Tord\rightarrow\Tord$ that semi-conjugates $f$ with its linear part. In Section~\ref{sec:structure} we analyze the structure of the sets $h^{-1}(x)$, and in Section~\ref{sec:BoundedGeom} we use that knowledge to deduce that the entropy of the map $f$ restricted to each one of these pre-image sets is zero. This, by Bowen's entropy formula, is enough to finish the proof of Theorem~\ref{teo:A}. In the last section we present the construction of the example described in Theorem~\ref{teo:B}. The article ends with a short Appendix containing the proof of Theorem \AE.

%%%%%%%%%%%%%%%%%%%%%%%%%%%%%%%%%%%%%%%%%%%%%%%%%%%%%%%%%%%%%%%%%%%%%%%%%
\section{Preliminaries}
In this section we review some basic notions and results that will be used throughout the paper.

\subsection{Entropy}\label{ssec:entropy}

Let $(X,\dist)$ be a metric space and $f:X\rightarrow X$ a uniformly continuous map. For $x\in X,\,n\in \mathbb{N},\,\epsilon>0$ we denote 
\[
B(x,n,{\epsilon})=\{y\in X: \max_{0\leq i\leq n-1} \textrm{dist}(f^ix,f^iy)<\epsilon\}.	
\]
Fix a set $K\subseteq X$. We say that a set $F\subseteq X$ $(n,\epsilon)$-\emph{spans} $K$ if 
\[K\subseteq\bigcup_{x\in F} B(x,n,{\varepsilon}).\]
If $K$ is compact we denote by $N(n,\epsilon,K)$ the minimum of the cardinalities of $(n,\epsilon)$-spanning sets for $K$.

\begin{definition} The \emph{topological entropy of $f$ on the compact set $K$} is 
\begin{equation}\label{eq:topent}
  \htop(f,K)=\lim_{\epsilon\rightarrow 0}\limsup_{n\rightarrow\infty} \frac{1}{n}\log N(n,\epsilon,K)\geq 0.
  \end{equation}
 The \emph{topological entropy of $f$} is
\[
\htop(f):=\sup_{K\subset X\ \mathrm{compact}} \{\htop(f,K)\}.
\]
\end{definition}
That the limit \eqref{eq:topent} exists is proven for example in \cite{Wal}. Alternatively, we can consider the following definition. A subset $E\subseteq K$ is called $(n,\epsilon)$-\emph{separated} if for $x\neq y\in E$ there exists $0\leq i\leq n-1$ such that $\textrm{dist}(f^ix,f^iy)>\epsilon$. We denote by $s(n,\epsilon,K)$ the largest cardinality possible of any $(n,\epsilon)$-separated set in $K$; we have (cf. \cite{B1971})
\begin{equation}\label{eq:topent2}
\htop(f,K)=\lim_{\epsilon\rightarrow 0}\limsup_{n\rightarrow\infty} \frac{1}{n}\log s(n,\epsilon,K).
 \end{equation}
Equality between the numbers given by \eqref{eq:topent} and \eqref{eq:topent2} is easily verifiable (cf. \cite{Wal}).
 
 Let $X$ and $Y$ be compact metric spaces, and let $f :X \to X$, $g: Y \to Y$, $\pi: X\to Y$ be continuous maps such that $\pi$ is surjective and $\pi\circ f = g\circ \pi$ (in other words, the maps $f$ and $g$ are semi-conjugated by the map $\pi$). Then Bowen \cite[Theorem 17]{B1971} proved that 
\begin{equation}\label{eq:Bowen}
\htop(f)\leq \htop(g)+\sup_{y\in Y}\htop(f,\pi^{-1}(y)).
\end{equation}

\subsection{Partial hyperbolicity}\label{ssec:ph}
We say that  a diffeomorphism $f\colon M\rightarrow M$  is \textit{partially hyperbolic} if there exists a continuous $Df$-invariant splitting $TM=E^s\oplus E^c\oplus E^u$ (we omit the reference to $f$ when no risk of confusion arises) and a Riemannian metric such that for every $x\in M$, for every unit vector $v^{\sigma}\in E^{\sigma}(x), \sigma=s,c,u$ it holds
\[
	\|Df(x)\cdot v^{s}\|<\|Df(x)\cdot v^{c}\|<\|Df(x)\cdot v^{u}\|.
\]
In this case we denote $\lambda_s:=\max_x\sup\{\|Df(x)\cdot v\|:v\in E^s(x),\|v\|\leq 1\}, \lambda_u:=\min_x\min\{\|Df(x)\cdot v\|:v\in E^u(x),\|v\|\leq 1\}$. The bundles $E^s,E^u,E^c$\ are called the \emph{stable, unstable} and \emph{center} bundle respectively. If $\dim E^c=0$, we say that $f$ is an \textit{Anosov diffeomorphism}. We also point out that the set of partially hyperbolic diffeomorphisms is  $\mathcal{C}^1$ - open (cf. \cite{HPS1977}). For partially hyperbolic diffeomorphisms, it is well-known that the stable and unstable bundles integrate to $f$-invariant foliations $\mathcal{F}^s$ and $\mathcal{F}^u$ \cite{HPS1977}. The leaf of $\mathcal{F}^{\sigma}$  containing $x$ will be denoted $W^{\sigma}(x)$, for $\sigma=s,u$. Such foliations are $f$-invariant and are contractive and expanding respectively, meaning that if we denote by ${\rm dist}_u,{\rm dist}_s$ the intrinsic metric in the corresponding leaf, then it holds for all $n\geq 0$,
\begin{equation}\label{eq:distsest}
y\in W^s(x)\Rightarrow {\rm dist}_{s}(f^nx,f^ny)\leq \lambda_s^n{\rm dist}_s(x,y),
\end{equation}
and
\begin{equation}\label{eq:distuest}
y\in W^u(x)\Rightarrow  \lambda_u^n{\rm dist}_u(x,y)\leq {\rm dist}_{u}(f^{n}x,f^{n}y).
\end{equation}

So far we have not discussed the integrability of $E^c$; in general this bundle is not integrable, even in the Anosov case \cite{SmaleBull}.  Here however, we will assume a stronger condition, which nevertheless it is usually satisfied in the examples.

\begin{definition}
A partially hyperbolic diffeomorphism $f$ is said to be dynamically coherent if both $E^{cs}=E^{s}\oplus E^c, E^{cu}=E^{c}\oplus E^u$ are integrable to $f$-invariant foliations $\mathcal{F}^{cs},\mathcal{F}^{cu}$. In this case intersecting the leaves of $\mathcal{F}^{cs},\mathcal{F}^{cu}$ we obtain an $f$-invariant foliation $\Fc$ tangent to $E^c$.
\end{definition}

The foliations $\mathcal{F}^{cs},\mathcal{F}^{cu}$ are called the center-stable and the center-unstable foliations of $f$. They are sub-foliated not only by leaves of $\Fc$ but also by leaves of $\Fs, \Fu$ respectively.

Before moving on let us mention that in the literature sometimes it is required quasi-isometry of the (lifts of) the strong stable and unstable foliations; assuming moreover that the system is absolutely partially hyperbolic (a more restrictive version of partial hyperbolicity) the hypotheses $(1)$ of Theorem \ref{teo:A} are satisfied. We are not assuming these conditions; we refer the reader to the surveys cited in the introduction and references therein for a more detailed discussion of this topic.

\smallskip

\noindent\textbf{Convention:} From now we fix $f$ satisfying the hypotheses of Theorem \ref{teo:A}; in particular
$M=\Tord$ and so its universal covering is $\tilde{M}=\Reald$. We add a tilde ($\tilde{\cdot}$) to denote the lift to $\tilde{M}$ (for example, $\uFc$ denotes the lift of the center foliation $\Fc$), with the exception of $f$ and $A$ since in this case no confusion would arise.  

We assume that the pairs of foliations $\uFs,\uFcu$ and $\uFu, \uFcs$ have GPS: for $x,y\in \Reald$ we denote
\begin{align*}
&\langle x,y\rangle_{csu}=\tW^{cs}(x)\cap\tW^u(y)\\
&\langle x,y\rangle_{cus}=\tW^{cu}(x)\cap\tW^s(y). 
\end{align*}

Let us recall the reader basic fact of foliation theory that if $\F=\{W(x)\}_{x\in M}$ is a foliation then, for $R>0$ fixed the family of discs $\{W(x,R)=\{y\in W(x):\dist_{\F}(x,y)\leq R\}\}_{x\in M}$ depends continuously on the base-point $x$. It is often said that $W(x,R)$ is a \emph{plaque} of the foliation $\F$.

\smallskip

 Denote $\pi:\tilde{M}\to M$ the projection and let $\tau:=\{T:\Reald\to\Reald: Tx=x+k, k\in \mathbb{Z}^d\}$; then $\tau$ is the Deck group of $\pi$, and in particular if $T\in \tau$,
 \begin{itemize}
 	\item $T$ is an Euclidean isometry;
 	\item $T$ preserves $\uF^{\sigma}, \sigma\in\{ s,c,u,cs,cu,i\}$.
 \end{itemize}
 As the metric on $M$ is induced by the one of $\tilde{M}$ via $\pi$, and $\pi\circ T=\pi$ we also have that:
 \begin{itemize}	
 	\item $T$ is an isometry for the intrinsic distance of $\tW^{\sigma}$.    
 \end{itemize}
 
 \smallskip

Recall the definition \ref{def.simple} of simple center bundle. For future reference we will record the following lemmas.

\begin{lemma}\label{lem.interseccionesuniformes}
Given $K>0$ there exists $L(K)<\infty$ such that 
\[
x,y\in \tilde{M}, \dist(x,y)<K\Rightarrow \dist(x,\langle x,y\rangle_{csu}), \dist(y,\langle x,y\rangle_{csu})<L(K).
\]
Similarly for the foliations $\uFcu,\uFs$.
\end{lemma}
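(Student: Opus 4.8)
The plan is to combine a soft compactness argument, exploiting the cocompactness of the Deck group, with a continuity statement for the intersection map that is where the GPS hypothesis really enters. Set $\mathcal{D}_K=\{(x,y)\in\tilde M\times\tilde M:\dist(x,y)\leq K\}$ and define $\phi:\mathcal{D}_K\to[0,\infty)$ by
\[
\phi(x,y)=\max\{\dist(x,\langle x,y\rangle_{csu}),\ \dist(y,\langle x,y\rangle_{csu})\}.
\]
By GPS the point $\langle x,y\rangle_{csu}=\tilde W^{cs}(x)\cap\tilde W^u(y)$ is a well-defined point of $\Reald$, so $\phi$ is finite-valued. Since every $T\in\tau$ is a Euclidean isometry preserving both $\uFcs$ and $\uFu$, one has $T\langle x,y\rangle_{csu}=\langle Tx,Ty\rangle_{csu}$, hence $\phi(Tx,Ty)=\phi(x,y)$; that is, $\phi$ is invariant under the diagonal action of $\tau$. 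Writing $\tilde M=\Reald$ and using the change of coordinates $(x,y)\mapsto(x,y-x)$, this diagonal action becomes $(x,v)\mapsto(x+k,v)$ on $\Reald\times\overline{B(0,K)}$, whose quotient $\Tord\times\overline{B(0,K)}$ is compact. Therefore, once I know $\phi$ is continuous, it descends to a continuous function on a compact space, hence is bounded, and I may take $L(K)$ to be any upper bound for it.

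So the task reduces to showing that $(x,y)\mapsto\langle x,y\rangle_{csu}$ is continuous on $\tilde M\times\tilde M$. Here is how I would argue. Fix $(x_0,y_0)$, put $p_0=\langle x_0,y_0\rangle_{csu}$, and choose $R$ strictly larger than $\mathrm{dist}_{cs}(x_0,p_0)$ and $\mathrm{dist}_u(y_0,p_0)$, so that $p_0$ lies in the interior of both plaques $P_0=\tilde W^{cs}(x_0,R)$ and $Q_0=\tilde W^u(y_0,R)$. Because $E^{cs}\oplus E^u=TM$ these plaques meet transversally at $p_0$, and because GPS forces $\tilde W^{cs}(x_0)\cap\tilde W^u(y_0)=\{p_0\}$ we in fact have $P_0\cap Q_0=\{p_0\}$. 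Since plaques of a foliation vary continuously with the base point (in the $C^1$ topology, by smoothness of the invariant foliations, cf.\ \cite{HPS1977}), for $(x,y)$ close to $(x_0,y_0)$ the plaques $\tilde W^{cs}(x,R)$ and $\tilde W^u(y,R)$ are $C^1$-close to $P_0$ and $Q_0$. By stability of transverse intersections near $p_0$, together with compactness of the plaques and the isolation $P_0\cap Q_0=\{p_0\}$ to rule out intersections appearing away from $p_0$, these two nearby plaques meet in exactly one point $p(x,y)$, varying continuously with $(x,y)$ and with $p(x_0,y_0)=p_0$. Finally, $p(x,y)\in\tilde W^{cs}(x)\cap\tilde W^u(y)$, so by GPS $p(x,y)=\langle x,y\rangle_{csu}$, which proves continuity at $(x_0,y_0)$.

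The case of $\uFcu,\uFs$ (the point $\langle x,y\rangle_{cus}$) is handled verbatim, exchanging the roles of the two families of foliations. I expect the main obstacle to be exactly the continuity step: one must exclude both the intersection point escaping to infinity and the intersection ceasing to be a single point as $(x,y)$ varies, and this is precisely the interplay between the \emph{global} uniqueness coming from GPS and the \emph{local} transversality of $E^{cs}$ and $E^u$. Everything else — the $\tau$-equivariance of all the data and the cocompactness of the Deck action — is routine.
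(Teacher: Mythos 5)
Your proposal is correct, and it takes a genuinely different route from the paper. The paper argues by contradiction: it takes sequences $(p_n),(q_n)$ with $\dist(p_n,q_n)\leq K$ but $\dist_{cs}(p_n,\langle p_n,q_n\rangle_{csu})\geq n$, pulls them back to a fundamental domain by deck transformations, uses the local product structure to replace $p_n$ and $q_n$ by translates of fixed limit points, and derives a contradiction from the finiteness of the set of deck transformations moving $q$ within bounded distance of $p$; in doing so it proves the stronger statement that the \emph{intrinsic} distances $\dist_{cs}(x,\langle x,y\rangle_{csu})$ and $\dist_u(y,\langle x,y\rangle_{csu})$ are uniformly bounded (a strengthening the authors note is never used later). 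You instead isolate the two ingredients cleanly: (i) continuity of $(x,y)\mapsto\langle x,y\rangle_{csu}$, which you correctly derive from the local transversality of $E^{cs}$ and $E^u$ (equivalently, the local product structure, which the paper also invokes) combined with the global uniqueness from GPS to identify the locally produced intersection point with $\langle x,y\rangle_{csu}$; and (ii) the $\tau$-invariance of $\phi$ together with cocompactness of the diagonal deck action, so that a continuous invariant function is bounded. This is more modular and arguably cleaner than the paper's sequential argument, at the cost of yielding only the ambient-distance bound --- which is exactly what the lemma asserts. Two cosmetic points: the ``isolation of $p_0$ in $P_0\cap Q_0$'' step is redundant, since GPS already gives global uniqueness of the intersection once a point is produced near $p_0$; and the identification of the quotient with $\Tord\times\overline{B(0,K)}$ is literally valid for the Euclidean metric, but in general one only needs that the quotient is a closed subset of a compact product, which suffices.
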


 \begin{proof}
 We will prove more, and in fact establish that the intrinsic distances 
 \[
 	\dist_{cs}(x,\langle x,y\rangle_{csu}),\ \dist_{u}(y,\langle x,y\rangle_{csu})
 \] are uniformly bounded\footnote{Later we will only need the simplified statement of this Lemma.}.

 Assume by means of contradiction that there exists $K>0$ and $(p_n)_n, (q_n)_n\in \tilde{M}$ such that 
 \[
 \dist(p_n,q_n)\leq K, \text{ such that }z_n=\langle p_n,q_n\rangle_{csu}, \dist_{cs}(p_n,z_n)\geq n.
 \]

 Consider $(T_n)_n\in \tau$ such that $T_n^{-1}p_n\in [0,1]^d$; it is no loss of generality to assume that $T_n^{-1}p_n\xrightarrow[n\to \infty]{}p$, and by the local product structure between $\uFu$ and $\uFcs$ we can further assume $T_n^{-1}p_n\in \tilde{W}^u(p)$. Thus $\lim_n \dist(T_n(p),p_n)=0$ and for $n$ large, $\dist(T_n(p),q_n)\leq K+1$. Let $w_n=\langle T_n(p),q_n\rangle_{csu}$ (see figure \ref{fig:figure2}).

\begin{figure}[h]
	\centering
	\includegraphics[width=0.5\linewidth]{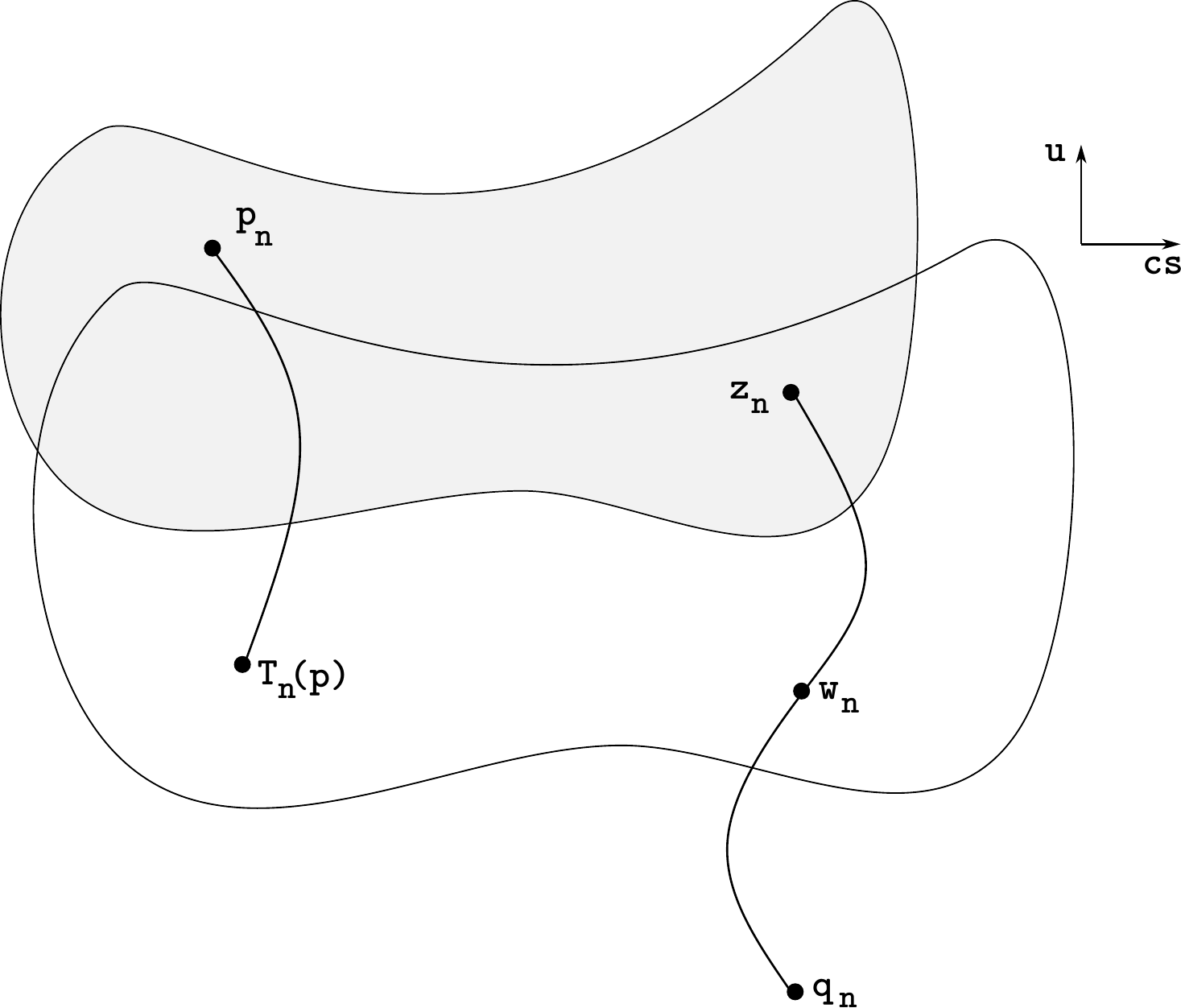}
	\caption{Diagram for the proof.}
	\label{fig:figure2}
\end{figure}

 \noindent\textbf{Claim:} $\limsup_n \dist_{cs}(T_n(p),w_n)=\infty$.

 If not, for some $R>0$ we have $w_n\in \tilde{W}^{cs}(T_n(p),R)=T_n(\tilde{W}^{cs}(p,R))$ for every $n\geq 0$. Since $\tW^{cs}(T_n^{-1}p_n,R)\xrightarrow[n\to\infty]{Haus} \tW^{cs}(p,R)$ we get that for $n$ sufficiently large the manifold $\tW^u(q_n)=\tW^u(w_n)$ intersects $\tW^{cs}(p_n,K)$. This gives a contradiction, since we had assumed that $\tW^u(q_n)$ intersects $\tW^{cs}(p_n)$ at $\dist_{cs}$ distance larger than $n$, for every $n$. 

 By the above argument we can suppose that $p_n=T_n(p)$. Now we argue analogously and find $(S_n)_n\in \tau$ such that 
 \begin{itemize}
  	\item $S_n(q_n)^{-1}\in [0,1]^d$;
  	\item $\lim_n S_n^{-1}(q_n)=q$;
  	\item $S_n^{-1}(q_n)\in \tilde{W}^u(q_n)$.
 \end{itemize} 
Note that for $n$ large, $\dist(T_n(p),S_n(q))\leq K+1$; it follows that if $R_n=T_n\circ S_n^{-1}\in \tau$, then  
$\dist(p,R_n(q))\leq K+1$ for $n$ large. Therefore, there exist finitely many $R_n$, this contradicts the fact 
$\lim_n \dist_{cs}(p,T_n^{-1}w_n)=\infty$.

Likewise for the $\dist_u$ distance; changing $f$ by $f^{-1}$ we get the result for $\uFcu,\uFs$. 
 \end{proof}

Let $\mathcal{K}(M)$ be the space of compact subsets of $M$, equipped with its natural Hausdorff distance. It is well known (cf. \cite[chapter 5]{HPS1977}) and simple to check that for  $\sigma\in \{s,c,u,cs,cu\}, K>0$ the maps $g\in\mathcal{C}^1\to \phi_g^{\sigma}\in \mathcal{C}^0(M,\mathcal{K}(M))$ are continuous, where
\begin{equation}\label{eq.depcontinuag}
	\phi_g^{\sigma}(x)=W^{\sigma}_g(x,K).
\end{equation}
In other words, for fixed radii $K$ the plaques $\{W^{\sigma}_g(x,K)\}_{x\in M}$ depend continuously on $g$. We deduce:

\begin{corollary}
Given $K>0$, there exists $\mathcal{U}$ a $\mathcal{C}^1$ neighborhood of $f$ and $L(K)>0$ such that if $g\in \mathcal{U}$ then $g$ is partially hyperbolic and
 \[
 x,y\in \tilde{M}\Rightarrow  w=\tilde{W}^{cs}_g(x)\cap \tilde{W}^{u}_g(y)\text{ with } \dist(x,w),\dist(y,w)<L(K).
 \]
Similarly for the foliations $\uFcu_g,\uFs_g$.
\end{corollary}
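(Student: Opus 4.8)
The plan is to deduce the Corollary from Lemma \ref{lem.interseccionesuniformes} together with two general stability facts: that partial hyperbolicity is $\mathcal{C}^1$-open (already recalled in Section \ref{ssec:ph}, citing \cite{HPS1977}), and that for a fixed radius $K$ the plaques $W^\sigma_g(x,K)$ depend continuously on $g$ (this is exactly \eqref{eq.depcontinuag}). The subtlety is that the Lemma gives bounds on the \emph{intrinsic} distances $\dist_{cs}(x,\langle x,y\rangle_{csu})$ and $\dist_u(y,\langle x,y\rangle_{csu})$ in terms of $K=\dist(x,y)$, and it is precisely these intrinsic bounds — not merely the ambient ones — that will let us control the perturbed intersection $w=\tilde W^{cs}_g(x)\cap\tilde W^u_g(y)$.

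First I would fix $K>0$ and set $R:=L_f(K)+1$, where $L_f(K)$ is the intrinsic bound furnished by Lemma \ref{lem.interseccionesuniformes} for $f$ itself (i.e.\ $\dist_{cs}(x,\langle x,y\rangle_{csu})\le L_f(K)$ and $\dist_u(y,\langle x,y\rangle_{csu})\le L_f(K)$ whenever $\dist(x,y)<K$). Then I would argue that for $g$ in a sufficiently small $\mathcal{C}^1$-neighborhood $\mathcal{U}$ of $f$, and for any $x,y\in\tilde M$ with $\dist(x,y)<K$, the plaque $\tilde W^{cs}_g(x,R)$ still meets the plaque $\tilde W^u_g(y,R)$ in exactly one point $w$. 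Existence of the intersection follows from a Hausdorff-continuity/transversality argument: the unperturbed plaques $\tilde W^{cs}_f(x,R),\tilde W^u_f(y,R)$ intersect transversally at $\langle x,y\rangle_{csu}$, which lies well inside both plaques (at intrinsic distance $\le L_f(K)<R$ from the respective base-points), so a small $\mathcal{C}^1$-perturbation of the plaques — which, by \eqref{eq.depcontinuag}, moves them only a little in the appropriate $\mathcal{C}^0(M,\mathcal{K}(M))$ topology, hence also in $\mathcal{C}^1$ by the graph-transform description of plaques in \cite[Chapter 5]{HPS1977} — still produces a transverse intersection point $w$ close to $\langle x,y\rangle_{csu}$, in particular still inside the (slightly enlarged) plaques. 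Uniqueness of $w$ \emph{within the plaques} follows because $\uFcs_g$ and $\uFu_g$ are transverse with uniformly bounded angle (an open condition inherited from $f$), so two intersection points in plaques of bounded diameter cannot coexist once $\mathcal{U}$ is small enough. Finally, since $w$ lies in $\tilde W^{cs}_g(x,R)$ and in $\tilde W^u_g(y,R)$, and since $R$ is a fixed radius, the ambient distances satisfy $\dist(x,w)\le R$ and $\dist(y,w)\le R$; setting $L(K):=R$ gives the claimed ambient bound.

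One technical point to be careful about: the plaques $W^\sigma_g(x,K)$ are defined on the compact manifold $M$, whereas the statement is about the lifts on $\tilde M=\Reald$. Here I would use the equivariance recorded in the Convention — the Deck group $\tau$ acts by isometries preserving all the (lifted) foliations — to transfer the continuity statement \eqref{eq.depcontinuag} on $M$ to a uniform (in the base-point) continuity statement for the lifted plaques on $\tilde M$: covering $[0,1]^d$ by finitely many plaque charts and translating by $\tau$, the perturbation estimate is uniform over all of $\tilde M$. The last sentence of the Corollary, for $\uFcu_g,\uFs_g$, follows by the same reasoning applied to $f^{-1}$ (equivalently, by the ``similarly'' clause of Lemma \ref{lem.interseccionesuniformes}), since $g\in\mathcal{U}$ implies $g^{-1}$ is $\mathcal{C}^1$-close to $f^{-1}$.

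The main obstacle, and the step deserving the most care, is making the passage from ``the unperturbed plaques intersect transversally at an interior point'' to ``the perturbed plaques still intersect in exactly one point inside plaques of the prescribed radius'' genuinely uniform over all pairs $(x,y)$ with $\dist(x,y)<K$ — i.e.\ choosing the single neighborhood $\mathcal{U}$ independently of $(x,y)$. This is where the compactness of $M$ (via $\tau$-equivariance) and the \emph{intrinsic} bound of Lemma \ref{lem.interseccionesuniformes} (which keeps $\langle x,y\rangle_{csu}$ a definite distance away from the boundary of the radius-$R$ plaques) do the essential work; once those are in place the rest is a routine application of $\mathcal{C}^1$-continuity of plaques and openness of transversality.
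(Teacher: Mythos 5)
Your proposal is correct and follows exactly the route the paper intends: the paper derives the Corollary by combining the \emph{intrinsic} bounds established in the proof of Lemma \ref{lem.interseccionesuniformes} with the continuous dependence of fixed-radius plaques on $g$ recorded in \eqref{eq.depcontinuag}, which is precisely your argument (you also correctly supply the implicit hypothesis $\dist(x,y)<K$ and the $\tau$-equivariance needed to make the perturbation uniform over $\tilde M$). The extra care you take with transversality and local uniqueness of the intersection is consistent with how the Corollary is later used in Lemma \ref{dentrohoja}.
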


\begin{remark}
We are not saying that the lifted foliations of $g$ to $\tilde{M}$ have GPS.
\end{remark}

For $p,q\in \tilde{M}$ in the same leaf of $\uF^i$ let us denote by $[p,q]_i$ the interval inside $\tW^i(z)$ joining these two points.

\begin{lemma}\label{lem.weakqi}
Given $K>0$, there exists $\tilde{L}=\tilde{L}(K)$ such that 
\[
	p,q\in \tilde{M}, q\in \tW^i(q), \dist(p,q)\leq K\Rightarrow [p,q]_i\subset \tW^c(p,\tilde{L}).  
\]
\end{lemma}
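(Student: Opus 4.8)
The statement asserts a form of uniform comparability between the ambient (Euclidean) distance and the intrinsic distance along the one-dimensional center-leaf segments $[p,q]_i$: if $p$ and $q$ lie on the same $\uF^i$-leaf and are $K$-close in $\tilde M$, then the whole segment joining them stays inside a center plaque $\tW^c(p,\tilde L)$ of radius depending only on $K$. Since $\mathcal F^i$ sub-foliates $\mathcal F^c$ (part (b) of Definition~\ref{def.simple}), the segment $[p,q]_i$ automatically lies in $\tW^c(p)$; the content is that its \emph{intrinsic} diameter in $\tW^c(p)$ is bounded. The plan is to argue by contradiction and compactness, exactly in the spirit of Lemma~\ref{lem.interseccionesuniformes}, using the cocompact action of the Deck group $\tau$ by isometries (both of $\tilde M$ and of the intrinsic metrics of all lifted leaves).

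First I would suppose the conclusion fails: there is $K>0$ and sequences $p_n,q_n\in\tilde M$ with $q_n\in\tW^i(p_n)$, $\dist(p_n,q_n)\le K$, yet the intrinsic center-radius of $[p_n,q_n]_i$ in $\tW^c(p_n)$ tends to infinity. Apply Deck transformations $T_n\in\tau$ with $T_n^{-1}p_n\in[0,1]^d$; passing to a subsequence, $T_n^{-1}p_n\to p$, and (using the local product structure of $\uFu$ with $\uFcs$, as in the previous lemma, or just local continuity of plaques) we may assume $T_n^{-1}p_n\in\tW^u(p)$ and in fact reduce to $p_n=T_n(p)$ after an intermediate claim that replacing $p_n$ by $T_n(p)$ changes the relevant intrinsic distances by a bounded amount — the continuous dependence of plaques $\tW^\sigma(\cdot,R)$ on the base point handles this. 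Since $T_n$ is an isometry of the intrinsic metric of $\tW^c$, we may as well pull everything back by $T_n^{-1}$ and work near the fixed compact piece: we then have $q_n':=T_n^{-1}q_n\in\tW^i(T_n^{-1}p_n)$ with $T_n^{-1}p_n\to p$, $\dist(T_n^{-1}p_n,q_n')\le K$, and $\dist_c(T_n^{-1}p_n,q_n')\to\infty$.

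The heart of the argument is to produce a contradiction from a sequence of points that are ambient-bounded but center-intrinsically far apart on a common $\mathcal F^i$-leaf. Here I would invoke GPS of the lifted foliations in the standard dynamically-coherent setting together with the strong simplicity hypothesis: condition (c) of Definition~\ref{def.simple} gives, inside each center leaf, Global Product Structure between $\mathcal F^i$ and the complementary foliation $\mathcal F^{\{1,\dots,\hat i,\dots,\ell\}}$. Using this I would set up a second normalization: find $S_n\in\tau$ with $S_n^{-1}q_n'$ landing in the unit cube, converging to some $q$, and argue that $R_n:=(\text{appropriate composition})\in\tau$ satisfies $\dist(p,R_n(q))\le K+1$ for large $n$, whence $\{R_n\}$ is finite (discreteness of $\tau$). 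But along the fixed leaf $\tW^i(p)$ — or rather $\tW^c(p)$ — the images $R_n^{-1}q_n'$ would then have to accumulate, contradicting $\dist_c\to\infty$ (a fixed point cannot be at unbounded intrinsic center-distance from points of a finite orbit). This is the step I expect to be the main obstacle: extracting the contradiction cleanly requires that the ambient-to-intrinsic blow-up in $\tW^c$ be tamed precisely by the GPS of the sub-foliations inside center leaves, and one must be careful that the "interval'' $[p_n,q_n]_i$ — which lives in a one-dimensional leaf — cannot wander far in $\tW^c$ without some pair of its endpoints (or the product-structure intersection point used to compare it with a nearby model leaf) escaping to infinity, which is impossible under GPS.

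Finally, for the other index groupings one notes that the argument only used (i) cocompactness of $\tau$, (ii) that $\tau$ acts by intrinsic isometries on all lifted leaves, (iii) continuous dependence of plaques on the basepoint, and (iv) GPS inside center leaves from strong simplicity — all symmetric in $i$ — so the same proof yields $\tilde L(K)$ working uniformly, and one may take the maximum over the finitely many $i\in\{1,\dots,\ell\}$. I would close by remarking, as the authors do after Lemma~\ref{lem.interseccionesuniformes}, that by continuous dependence of the plaques on $g\in\mathcal C^1$ the bound $\tilde L(K)$ persists on a $\mathcal C^1$-neighborhood of $f$ among diffeomorphisms whose center still splits into the relevant one-dimensional sub-foliations, though this is not strictly needed for the statement as written.
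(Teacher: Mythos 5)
There is a genuine gap, and it sits exactly where you anticipated trouble. Two related problems. First, the negation of the statement is that the \emph{interval} $[p_n,q_n]_i$ escapes the plaque $\tW^c(p_n,n)$, i.e.\ some point of the segment is at intrinsic center-distance $>n$ from $p_n$; it does \emph{not} follow that $\dist_c(T_n^{-1}p_n,q_n')\to\infty$ for the endpoint, as you assert after the normalization. For $\ell\geq 2$ the one-dimensional $\uF^i$-segment can make a long excursion inside the $\ell$-dimensional center leaf and return, with both endpoints close in every sense. (And one cannot repair this by truncating at the first exit point of the plaque, since that point need not satisfy the ambient bound $\dist\leq K$.) Second, even granting your reformulation, the closing step is circular: finiteness of $\{R_n\}$ gives that the normalized endpoints $T_n^{-1}q_n$ converge in the \emph{ambient} metric, and you conclude this "contradicts $\dist_c\to\infty$". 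But ambient accumulation of the endpoints is perfectly compatible with the connecting $\uF^i$-segments having unbounded intrinsic excursion in $\tW^c$ --- ruling that out is precisely the content of the lemma. In Lemma~\ref{lem.interseccionesuniformes} the analogous step closes because the object whose distance diverges is the GPS intersection point $\langle\cdot,\cdot\rangle_{csu}$, which depends continuously on the two base points and hence converges when they do; here no such continuously-defined point has been exhibited.

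The paper's proof supplies the missing mechanism. After normalizing so that $T_n^{-1}p_n\to p$ and $T_n^{-1}q_n\to q\in\tW^c(p,K)$, one uses GPS \emph{inside the center leaf} to define $w=\tW^i(p)\cap\tW^{\{1,\dots,\hat\imath,\dots,\ell\}}(q)$, and continuity of plaques to find $w_n\in\tW^i(T_n^{-1}p_n)$ with $[T_n^{-1}p_n,w_n]_i$ of uniformly bounded length converging to $[p,w]_i$; the escaping hypothesis then forces $\dist_i(w_n,T_n^{-1}q_n)\to\infty$. For large $n$ the leaf $\tW^i(T_n^{-1}p_n)$ meets the complementary leaf $\tW^{\{1,\dots,\hat\imath,\dots,\ell\}}(T_n^{-1}q_n)$ both near $w_n$ (by continuity, since $w\neq q$) and at $T_n^{-1}q_n$ itself --- two distinct intersection points, contradicting the uniqueness in condition (c) of Definition~\ref{def.simple}. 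Your parenthetical about "the product-structure intersection point used to compare it with a nearby model leaf" points at this, but the argument you actually write down does not implement it, and the one you do write down does not close.
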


\begin{proof}
The proof follows the same lines of the previous Lemma. Assuming this not being the case, there exists some $K>0$ 
and sequences $(p_n)_n, (q_n)_n$ such that $\dist(p_n,q_n)\leq K$, $[p_n,q_n]_i \not\subset \tW^c(p_n,n)$. By using translations (which are isometries for the intrinsic length of $\uF^i$) we can assume the existence of a sequence $(T_n)_n\in\tau$  such that 
\begin{itemize}
	\item $T_n^{-1}(p_n)\xrightarrow[n\to\infty]{}p$, for some $p\in [0,1]^d$;
    \item $\dist(T_n^{-1}(p_n),T_n^{-1}(q_n))\leq K$;
    \item $[T_n^{-1}(p_n),T_n^{-1}(q_n)]_i\not\subset\tW^c(q_n,n)$.
\end{itemize}
Since $T_n^{-1}(q_n)\in\tW^i(T_n^{-1}(p_n),K)$ and $\tW^i(T_n^{-1}(p_n),K)\xrightarrow[n\to\infty]{Haus}\tW^i(p,K)$ we can further assume
\begin{itemize}
	\item $T_n^{-1}(q_n)\xrightarrow[n\to\infty]{}q$, for some $q\in \tW^c(p,K)$.
\end{itemize}
Let $w=\tW^i(p)\cap \tW^{\{1,\cdots,\hat{i},\cdots,\ell\}}(q)$; by continuous dependence of $\uF^i$ on compact sets there exists $w_n\in \tW^i(T_n^{-1}(p_n))$ such that $[T_n^{-1}(p_n),w_n]_i\xrightarrow[n\to\infty]{Haus}[p,w]_i$, and in particular the intrinsic length of $[T_n^{-1}(p_n),w_n]_i$ is uniformly bounded, hence $\lim_n\dist_i(w_n,T_n^{-1}(q_n))=\infty$. By taking another subsequence we can assume that $w_n\xrightarrow[n\to\infty]{}w$, and then we can replace $T_n^{-1}(p_n)$ by $w_n$ and $p$ by $w$; note that $w\neq q$. But then for large $n$ necessarily $\tW^i(w_n)$ intersects $\tW^{\{1,\cdots,\hat{i},\cdots,\ell\}}(T^{-1}_n(q_n))$ in $w_n$, contradicting the GPS between $\uF^i$ and $\uF^{\{1,\cdots,\hat{i},\cdots,\ell\}}$ inside $\tW^c(w_n)$.
\end{proof}

\begin{remark}
If the leaves of $\Fc$ are simply connected the proof simplifies by using the compactness of the underlying manifold; however, this assumption leaves natural examples out of consideration, and we opted for giving the more general proof.
\end{remark}

\smallskip

\subsection{Semi-conjugation with the linear part}\label{semiconjugation}
 Now we discuss the fact that $f$ has hyperbolic linear part. Due to Franks \cite{F70} there exists a semi-conjugacy $h:M\rightarrow M$
%\Tord$ 
between $f$ and $A$, i.e.
\[\xymatrix{
M \ar[r]^{f} \ar[d]_{h} & M \ar[d]^{h}\\
M \ar[r]_A & M
%\Tord\ar[r]_A & \Tord
}\]
Its lift $\tilde{h}$ to $\tilde{M}$ semi-conjugates $\tilde{f}$ with $A$, and for some constant $K>0$, we have
\[
\|\tilde{h}-Id\|_{\mathcal{C}^0}\leq \frac{K}{2}. 
\]
In particular, $\tilde{h}$ is proper. We also remind the reader the following basic fact: if $g$ is a sufficiently small $\mathcal{C}^1$ perturbation of $f$, then $g$ is partially hyperbolic and furthermore is homotopic to $f$, hence in particular it has the same linear part $A$. The constant $K$ is uniformly bounded in a open $\mathcal{C}^1$ neighboorhood of $f$.

\begin{remark}\label{rmk.uniformetama}
For every $\tilde{x}\in \tilde{M}$, each $[\tilde{x}]$ is a compact set whose diameter is uniformly bounded from above
$ {\rm diam}([\tilde{x}])\leq K$. In particular, since $h\circ f^n=A^n\circ h$ for every $n\in\mathbb{Z}$ we also deduce 
\begin{equation}\label{claseacotada2}
{\rm diam}(\tilde{f}^n[\tilde{x}])\leq K, 
\end{equation}
for every  $n\in\mathbb{Z}.$ By the previous observation this is also true in a $\mathcal{C}^1$ neighborhood of $f$.
\end{remark}

In the next section, we analyze the structure of these sets $[x]$.

%%%%%%%%%%%%%%%%%%%%%%%%%%%%%%%%%%%%%%%%%%%%%%%%%%%%%%%%%%%%%%%%%%%%%%%%%
\section{Structure of the pre-image classes}\label{sec:structure}

We start noticing the following.

\begin{lemma}\label{dentrohoja} 
For every $x\in M$, the class $[x]$ is contained in a unique center leaf of $\Fc$. Likewise for a sufficiently small $\mathcal{C}^1$ perturbation of $f$.
\end{lemma}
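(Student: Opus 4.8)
The goal is to show that for each $x\in M$, the class $[x]=h^{-1}(h(x))$ lies in a single leaf of $\Fc$. The natural strategy is to work in the universal cover and argue via the hyperbolicity of $A$ together with the GPS hypotheses. First I would fix $\tilde x\in\tilde M$ and take any $\tilde y\in[\tilde x]$ (the lifted class through $\tilde x$); by Remark~\ref{rmk.uniformetama} we know $\dist(\tilde f^n\tilde x,\tilde f^n\tilde y)\le K$ for all $n\in\mathbb Z$, since $\tilde h$ moves points by at most $K/2$ and $\tilde f^n\tilde x,\tilde f^n\tilde y$ lie in the same class $\tilde f^n[\tilde x]=[\tilde f^n\tilde x]$. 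So the whole orbit of the pair $(\tilde x,\tilde y)$ stays within bounded distance.

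\textbf{Key steps.} The plan is to use GPS to decompose the relation $\tilde y\in[\tilde x]$ into a "$cs$-part" and a "$cu$-part", and then push each to the center. Concretely, set $z=\langle \tilde x,\tilde y\rangle_{cus}=\tW^{cu}(\tilde x)\cap\tW^s(\tilde y)$, which exists and is unique by GPS of $\uFs,\uFcu$, and by Lemma~\ref{lem.interseccionesuniformes} satisfies $\dist(\tilde y,z)<L(K)$, hence $\dist(\tilde f^n\tilde y,\tilde f^n z)$ along stable leaves shrinks under forward iteration. I would then argue: (i) since $\tilde y,z$ are on a common stable leaf and stable distances contract, the pair $(\tilde f^n z,\tilde f^n\tilde y)$ stays bounded for all $n\ge 0$, and combined with $\dist(\tilde f^n\tilde x,\tilde f^n\tilde y)\le K$, the pair $(\tilde f^n\tilde x,\tilde f^n z)$ stays bounded for $n\ge 0$; a symmetric argument with $\langle\tilde x,\tilde y\rangle_{csu}$ and backward iteration controls the unstable direction. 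The crux is then to show that a point $z\in\tW^{cu}(\tilde x)$ whose forward orbit stays at bounded distance from that of $\tilde x$ must actually lie in $\tW^c(\tilde x)$ — i.e., bounded forward orbits inside a $cu$-leaf force the $u$-coordinate to vanish. This follows because the unstable holonomy inside $\tW^{cu}$ expands distances uniformly (inequality \eqref{eq:distuest}) while the center plaques have bounded geometry: if $z$ had a nontrivial unstable component relative to $\tilde x$ inside $\tW^{cu}(\tilde x)$, iterating forward would drive $\dist(\tilde f^n\tilde x,\tilde f^n z)\to\infty$, contradicting boundedness. Dually, backward iteration handles the stable side, forcing $z\in\tW^c(\tilde x)$; chaining the two reductions gives $\tilde y\in\tW^c(\tilde x)$. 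Since $\tilde x$ was arbitrary in its class, the whole lifted class sits in one center leaf, and projecting down finishes the statement for $f$.

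\textbf{Perturbations.} For a small $\mathcal C^1$ perturbation $g$ of $f$: $g$ is again partially hyperbolic and DA with the same linear part $A$ (Section~\ref{semiconjugation}), the semi-conjugacy constant $K$ stays uniformly bounded (Remark~\ref{rmk.uniformetama}), the Corollary after Lemma~\ref{lem.interseccionesuniformes} gives uniform bounds $\dist(x,w),\dist(y,w)<L(K)$ for the intersections $\tW^{cs}_g\cap\tW^u_g$ and $\tW^{cu}_g\cap\tW^s_g$, and inequalities \eqref{eq:distsest}–\eqref{eq:distuest} hold with uniform $\lambda_s<1<\lambda_u$ on a neighborhood of $f$. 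One then needs $g$ to be dynamically coherent so that $\Fc_g$ makes sense — this holds on a neighborhood since dynamical coherence is part of the standing hypotheses and the foliations vary continuously. The same argument then goes through verbatim.

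\textbf{Main obstacle.} I expect the delicate point to be the implication "bounded forward orbit inside a $cu$-leaf $\Rightarrow$ zero unstable component." One must be careful that the expansion \eqref{eq:distuest} is along \emph{intrinsic} unstable leaf distance, and that the $u$-coordinate of $z$ relative to $\tilde x$ inside the $cu$-leaf is comparable to such an intrinsic distance — here is exactly where GPS of $\uFs,\uFcu$ (and the uniform bounds of Lemma~\ref{lem.interseccionesuniformes}) is used, to project $z$ onto $\tW^u(\tilde x)$ and onto $\tW^c(\tilde x)$ with controlled geometry, so that a nontrivial unstable projection really does grow exponentially under $\tilde f^n$ while staying inside the bounded set $\tilde f^n[\tilde x]$. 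Handling possibly non-simply-connected center leaves (as flagged in the Remark after Lemma~\ref{lem.weakqi}) may require the same translation-in-$\tau$ trick used there, rather than naive compactness.
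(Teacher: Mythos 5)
Your overall architecture matches the paper's: lift to the universal cover, use the uniform bound $\diam(\tilde f^n[\tilde x])\leq K$ from Remark~\ref{rmk.uniformetama}, project with the global product structures and Lemma~\ref{lem.interseccionesuniformes}, and intersect a $cs$-conclusion with a $cu$-conclusion. There is, however, a genuine gap at exactly the point you flag as delicate. You claim that a nontrivial unstable component forces $\dist(\tilde f^n\tilde x,\tilde f^n z)\to\infty$, invoking \eqref{eq:distuest}. That inequality only gives divergence of the \emph{intrinsic} unstable distance, and the paper explicitly does not assume quasi-isometry of $\uFu$: intrinsic unstable length can grow without bound while the ambient Euclidean distance stays bounded, so there is no contradiction yet with $\tilde f^n z$ remaining in a ball $B(\tilde f^n\tilde x,L)$ of fixed radius. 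The paper supplies the missing mechanism: if an unstable plaque $\tW^u(f^nz',r_n)$ with $r_n\to\infty$ is trapped in a ball of fixed radius $L$, it must accumulate on itself, and one then finds two disjoint sub-plaques $\tW^u(t,r),\tW^u(t',r)$ of the \emph{same} unstable leaf so close that $\tW^{cs}(t)\cap\tW^u(t',r)\neq\emptyset$, i.e.\ a single $cs$-leaf meets a single $u$-leaf at least twice, contradicting the GPS of $\uFcs,\uFu$. This self-accumulation argument is the essential content of the lemma and is absent from your proposal; "grows intrinsically while staying bounded" is not by itself absurd.

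A secondary issue: your reduction takes place inside a $cu$-leaf and speaks of the "$u$-coordinate of $z$ relative to $\tilde x$", but no product structure between $\uFc$ and $\uFu$ inside $cu$-leaves is among the hypotheses, so this coordinate is not defined a priori. The paper sidesteps this by working with the pair $(z',w)$ where $w=\langle z,z'\rangle_{csu}\in\tW^u(z')$ lies on a single unstable leaf by construction, proving $w=z'$ directly; this yields $[\tilde x]\subset\tW^{cs}(z)$, the $f^{-1}$ argument yields $[\tilde x]\subset\tW^{cu}(z)$, and dynamical coherence gives $\tW^{cs}(z)\cap\tW^{cu}(z)=\tW^c(z)$. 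You could repair your version by projecting $z$ to $\tW^{cs}(\tilde x)\cap\tW^u(z)\in\tW^c(\tilde x)$ via the global GPS, but you would still need the accumulation/GPS contradiction above to conclude that this projection equals $z$.
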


\begin{proof}
It suffices to show that for every $\tilde{x}\in\mathbb{R}^d$ the class $[\tilde{x}]$ is contained in a unique leaf $\tilde{\mathcal{F}}^c$. Take $z,z'\in [\tilde{x}]$ and let $w=\langle z,z'\rangle_{csu}$. By Lemma ~\ref{lem.interseccionesuniformes}, for some $L<\infty$ it holds
\[
	\forall n\geq 0\ (n\in\mathbb{Z}),\ f^nw\in B(f^nz',L),
\]
but on the other hand $\dist_u(f^nz',f^nw)\geq \lambda_u^n\dist_u(z,w):=r_n$. By uniform continuity of $\uFu$ we deduce that for $n$ large the disc $\tW^u(f^nz',r_n)$ has to accumulate inside $B(f^nz',L)$, and therefore we will be able to find two disjoint plaques $\tW^u(t,r), \tW^u(t',r)\subset \tW^u(f^nz',r_n)$ that are close enough that $\tW^{cs}(t)\cap \tW^u(t',r)\neq\emptyset$. This contradicts the GPS between $\uFu$ and $\uFcs$. 

Therefore $w=z'$ and $[\tilde{x}]\subset \tW^{cs}(z)$. Arguing with $f^{-1}$ we deduce that $[\tilde{x}]\subset \tW^{cu}(z)\cap \tW^{cu}(z)=\tW^{c}(z)$.

Observe that all arguments are $\mathcal{C}^1$ robust: by Remark \ref{rmk.uniformetama} there  exists a $\mathcal{C}^1$ neighborhood $\mathcal{U}$ of $f$ and $K>0$ such that if $g\in\mathcal{U}$ then $\sup_{x\in M}\mathtt{diam}([\tilde{x}]_g)<K$. Given that the lifted foliations of $f$ have GPS, by \eqref{eq.depcontinuag} we get that the lifted foliations of $g$ have GPS for points $z,z'\in\tilde{M}$ with $\dist(z,z')<K$. Therefore by the same argument we conclude that $[\tilde{x}]_g$ is contained in a center leaf of $\uFc_g$, for $g\in\mathcal{U}$. 
\end{proof}

We can give a more precise characterization of $[x]$.  Recall that for $z\in \tilde{M}, z'\in \tilde{W}^i(z)$ we are denoting by $[z,z']_i$ the closed interval inside $\tilde{W}^i(z)$ with endpoints $z$ and $z'$. 

\begin{lemma}\label{intervalos}
For every $\tilde{x}\in \tilde{M}$, if  $z,z'\in [\tilde{x}]$ and  $z' \in \tW^i(z)$ for some $1\leq i\leq \ell$, then
\[ [z,z']_i\subseteq [\tilde{x}].\]
\end{lemma}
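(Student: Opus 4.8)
The plan is to show that every point $w\in[z,z']_i$ belongs to the same pre-image class $[\tilde x]$, i.e.\ that $\tilde h(w)=\tilde h(z)=\tilde h(z')$. Since $\tilde h$ semi-conjugates $\tilde f$ with the linear map $A$, and $A$ is hyperbolic, it suffices to control the forward and backward orbits of $w$ relative to those of $z$ and $z'$. The key geometric input is Lemma \ref{lem.weakqi}: because $z,z'$ lie in a common class, Remark \ref{rmk.uniformetama} gives $\dist(\tilde f^n z,\tilde f^n z')\le K$ for all $n\in\mathbb Z$, and since $\tilde f^n z'\in\tW^i(\tilde f^n z)$ for all $n$ (the foliation $\F^i$ is $f$-invariant), Lemma \ref{lem.weakqi} yields a uniform bound $\tilde L=\tilde L(K)$ with $[\tilde f^n z,\tilde f^n z']_i=\tilde f^n([z,z']_i)\subset \tW^c(\tilde f^n z,\tilde L)$ for every $n\in\mathbb Z$. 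In particular $\diam(\tilde f^n([z,z']_i))$ is bounded (using that a center plaque of fixed intrinsic radius has bounded Euclidean diameter, by continuity/compactness), uniformly in $n$.

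From here the argument is the standard "bounded orbit forces collapse under a hyperbolic factor" reasoning. First I would fix $w\in[z,z']_i$ and set $v_n=\tilde h(\tilde f^n w)=A^n\tilde h(w)$, and similarly $u_n=A^n\tilde h(z)$. Since $\|\tilde h-\mathrm{Id}\|_{\mathcal C^0}\le K/2$, we get $\|v_n-u_n\|\le \|\tilde f^n w-\tilde f^n z\|+K\le \diam(\tilde f^n([z,z']_i))+K$, which is bounded by a constant $C=C(K)$ independent of $n\in\mathbb Z$. Thus the vector $\delta:=\tilde h(w)-\tilde h(z)$ satisfies $\|A^n\delta\|\le C$ for all $n\in\mathbb Z$. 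Because $A\colon\Reald\to\Reald$ is the lift of a hyperbolic toral automorphism, $\Reald=E^s_A\oplus E^u_A$ with $A$ contracting $E^s_A$ and expanding $E^u_A$; writing $\delta=\delta^s+\delta^u$, boundedness of $\|A^n\delta\|$ as $n\to+\infty$ forces $\delta^u=0$ and as $n\to-\infty$ forces $\delta^s=0$, hence $\delta=0$, i.e.\ $\tilde h(w)=\tilde h(z)$. Therefore $w\in[z]=[\tilde x]$, and since $w$ was arbitrary, $[z,z']_i\subseteq[\tilde x]$.

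The main obstacle is establishing the uniform bound on $\diam(\tilde f^n([z,z']_i))$ for all $n\in\mathbb Z$ simultaneously; this is exactly what Lemma \ref{lem.weakqi} provides, but one must be slightly careful that its hypothesis applies at every iterate. The point is that $\dist(\tilde f^n z,\tilde f^n z')\le K$ holds for all $n$ by Remark \ref{rmk.uniformetama} (both $z,z'$ being in the class $[\tilde x]$), and $\tilde f^n z'\in\tW^i(\tilde f^n z)$ by invariance of $\F^i$, so Lemma \ref{lem.weakqi} with that same $K$ bounds the $i$-interval inside a center plaque of radius $\tilde L(K)$; passing from intrinsic center radius to Euclidean diameter only costs a further uniform constant by continuity of the center plaques of fixed radius together with compactness of $M$ (via the Deck group action). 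Once this is in hand, the hyperbolicity of $A$ does the rest automatically, with no further estimates needed. Note the robustness remark is not claimed here, so no perturbation argument is required.
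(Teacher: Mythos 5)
Your argument is correct, and its first half is exactly the paper's: Remark \ref{rmk.uniformetama} gives $\dist(\tilde f^nz,\tilde f^nz')\le K$ for all $n\in\mathbb Z$, and Lemma \ref{lem.weakqi} then confines $[\tilde f^nz,\tilde f^nz']_i=\tilde f^n([z,z']_i)$ in the plaque $\tW^c(\tilde f^nz,\tilde L)$, uniformly in $n$. You part ways at the concluding step. The paper argues by contradiction: if $\tilde h([z,z']_i)$ were not the single point $\tilde x$, it would be a nontrivial connected set, its $A$-iterates would grow in diameter (forward or backward in time), hence so would the length of $[\tilde f^nz,\tilde f^nz']_i$; a curve of unbounded length trapped in a plaque of bounded radius must accumulate on itself, contradicting the one-point intersection (GPS) of $\uF^i$ with $\uF^{\{1,\cdots,\hat i,\cdots,\ell\}}$ inside the center leaf. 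You instead conclude pointwise: for each $w\in[z,z']_i$ the vector $\delta=\tilde h(w)-\tilde h(z)$ satisfies $\|A^n\delta\|\le C$ for all $n\in\mathbb Z$, hence $\delta=0$ by hyperbolicity of $A$. Your version is more elementary and arguably cleaner: it makes no further appeal to the GPS inside center leaves at this stage (that hypothesis is still consumed, but only through Lemma \ref{lem.weakqi}), and it replaces the somewhat informal ``growth forces accumulation'' step by a two-line linear-algebra fact about bounded orbits of a hyperbolic linear map. Finally, the point you flag as delicate --- converting the intrinsic center radius $\tilde L$ into a Euclidean diameter bound --- is immediate, since intrinsic leaf distance dominates ambient distance; no extra compactness or Deck-group argument is needed there.
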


\begin{proof}
The proof is analogous to the previous Lemma. Consider $[z,z']_i$, by Lemma ~\ref{lem.weakqi} there exists some $L>0$ such that
\[
	\forall n\geq \mathbb{Z},\ [f^nz,f^nz']_i \subset \tW^c(f^nz,L).
\]
Since $A$ is hyperbolic, either $\tilde{h}([z,z'])=\{\tilde{x}\}$ or it is a closed non-trivial curve, and thus it growths under iterates of $A$, either for the past or the future. Therefore the intrinsic length of $[f^nz,f^nz']_i$ growths, which implies that it will accumulate inside $\tW^c(f^nz,L)$ contradicting the fact that $\uF^i$ intersects $\uF^{\{1,\cdots,\hat{i},\cdots,\ell\}}$ in at most one point.
\end{proof}

\begin{remark}
This argument would extend to $\mathcal{C}^1$ perturbations $g$ of $f$ provided that we knew that $\F^i$ exists and
\[
	g\to \{W^i_g(x,K)\}_{x\in M}
\]
is continuous in the Hausdorff topology; the continuity part is true for example if $E^c_f=E^1_f\oplus_<\cdots \oplus_< E^\ell_f$ is dominated (\cite[chapter 5]{HPS1977}). This is the only place where the domination condition on $E^c_f$ is used in the article. 
\end{remark}

\begin{corollary}\label{cor.intervalosstrong}
Assume that $f$ has a strongly simple center. Then there exists $\mathcal{U}$ a $\mathcal{C}^1$ open neighborhood of $f$ such that if $g$ in $\mathcal{U}$ has simple center, then for $1\leq i\leq \ell$ it holds
\[
\forall x\in M,	z,z'\in [x]_g, z' \in W^i_g(z)\Rightarrow [z,z']_i\subseteq [x]_g.
\]
\end{corollary}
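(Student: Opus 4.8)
The plan is to treat this as the $\mathcal{C}^1$-robust version of Lemma~\ref{intervalos}: make each ingredient of that proof uniform over a $\mathcal{C}^1$ neighborhood of $f$, and then rerun the hyperbolicity argument. Working in the universal cover, it suffices to produce $\mathcal{U}$ such that for $g\in\mathcal{U}$ with simple center and $1\le i\le\ell$: if $\tilde z,\tilde z'\in[\tilde x]_g$ and $\tilde z'\in\tW^i_g(\tilde z)$, then $[\tilde z,\tilde z']_i\subseteq[\tilde x]_g$; the stated (downstairs) form follows by projecting and choosing the appropriate lifts.

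First I would fix the uniform data. From Remark~\ref{rmk.uniformetama} there are $\mathcal{U}_1$ and $K>0$ with $\mathtt{diam}(\tilde g^n[\tilde y]_g)\le K$ for all $\tilde y$, all $n\in\mathbb{Z}$ and all $g\in\mathcal{U}_1$, and (enlarging $K$) $\|\tilde h_g-\mathrm{Id}\|_{\mathcal{C}^0}\le K/2$ on $\mathcal{U}_1$. The robust form of Lemma~\ref{dentrohoja} gives $\mathcal{U}_2\subseteq\mathcal{U}_1$ on which $[\tilde y]_g$ lies in a single leaf of $\uFc_g$, and by part (b) of Definition~\ref{def.simple} the foliation $\uF^i_g$ sub-foliates $\uFc_g$. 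The crucial step is a uniform version of Lemma~\ref{lem.weakqi}: $\mathcal{U}_3\subseteq\mathcal{U}_2$ and $\tilde L=\tilde L(K)$ with
\[
p,q\in\tilde M,\ q\in\tW^i_g(p),\ \dist(p,q)\le K\ \Longrightarrow\ [p,q]_i\subset\tW^c_g(p,\tilde L)\qquad(g\in\mathcal{U}_3\ \text{with simple center}).
\]
Granting this and setting $\mathcal{U}=\mathcal{U}_3$, the rest repeats Lemma~\ref{intervalos}: given $p\in[\tilde z,\tilde z']_i$, the arc $\tilde g^n[\tilde z,\tilde z']_i=[\tilde g^n\tilde z,\tilde g^n\tilde z']_i$ has both endpoints in $\tilde g^n[\tilde x]_g$, hence at distance $\le K$, so $\tilde g^np\in\tW^c_g(\tilde g^n\tilde z,\tilde L)$ and $\dist(\tilde g^np,\tilde g^n\tilde z)\le\tilde L$; using $A^n\tilde h_g=\tilde h_g\tilde g^n$ and $\|\tilde h_g-\mathrm{Id}\|_{\mathcal{C}^0}\le K/2$,
\[
\bigl\|A^n\bigl(\tilde h_g(p)-\tilde h_g(\tilde z)\bigr)\bigr\|=\bigl\|\tilde h_g(\tilde g^np)-\tilde h_g(\tilde g^n\tilde z)\bigr\|\le\tilde L+K
\]
for every $n\in\mathbb{Z}$, and hyperbolicity of $A$ forces $\tilde h_g(p)=\tilde h_g(\tilde z)=\tilde h_g(\tilde x)$, i.e. $p\in[\tilde x]_g$. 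As $p$ was arbitrary, $[\tilde z,\tilde z']_i\subseteq[\tilde x]_g$.

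The hard part will be the uniform Lemma~\ref{lem.weakqi}. Its original proof relied on the GPS of $\uF^i_f$ and $\uF^{\{1,\dots,\hat i,\dots,\ell\}}_f$ inside the leaves of $\uFc_f$ (strong simplicity of $f$), and for a perturbation $g$ with merely simple center these one-dimensional foliations are not dynamically defined, so there is no off-the-shelf robust substitute for that GPS. I would argue by contradiction along a sequence $g_n\to f$, reproducing the translation/compactness scheme of Lemma~\ref{lem.weakqi} with an extra limit in $n$; the uniformity is consumed precisely at the Hausdorff convergence $W^i_{g_n}(\,\cdot\,,R)\to W^i_f(\,\cdot\,,R)$, and likewise for $\mathcal{F}^{\{1,\dots,\hat i,\dots,\ell\}}$, after which a limiting misbehaving configuration contradicts the strong-simplicity GPS of $f$ inside its center leaves. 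As the Remark after Lemma~\ref{intervalos} points out, this continuity of the center sub-foliation plaques is exactly what \cite[Ch.~5]{HPS1977} supplies once the center decomposition of $f$ is dominated, $E^c_f=E^1_f\oplus_<\cdots\oplus_<E^\ell_f$, which is the one place that hypothesis enters.
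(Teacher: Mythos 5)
Your proposal is correct and follows essentially the same route as the paper: the paper's ``proof'' of this corollary is precisely the remark preceding it, namely that the argument of Lemma \ref{intervalos} extends to perturbations once $\F^i_g$ exists (guaranteed by simplicity of the center of $g$) and the plaques $W^i_g(x,K)$ vary continuously with $g$ (supplied by \cite[chapter 5]{HPS1977} under domination of $E^c_f$, which is where that hypothesis enters, exactly as you say). Your rewriting of the final hyperbolicity step via boundedness of $A^n\bigl(\tilde h_g(p)-\tilde h_g(\tilde z)\bigr)$ is a harmless (in fact cleaner) variant of the paper's growth-and-accumulation argument, and your uniform version of Lemma \ref{lem.weakqi} is the same compactness scheme the paper relies on.
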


\medskip

Fix  a point $c_0\in M$ and an integer $k$, $0\leq k\leq \ell$. We call a \emph{rectangle} (with corner $c_0$ and dimension $k$) to a compact set $R_k\subseteq W^c(c_0)$  obtained by the following inductive procedure. Let $c_0,\dots ,c_k$, where $c_j\in W^{i_j}(c_0)$; start with $R_1=[c_0,c_1]_{i_1}\subset W^{i_1}(c_0)$, then consider $[c_0,c_2]_{i_2}\subset W^{i_2}(c_0)$, $i_2\neq i_1$, and define $R_2$ as the trace inside $W^c(c_0)$ of the set obtained by sliding $R_1$ along $[c_0,c_2]_{i_2}$, that is,
\[
R_2=\bigcup_{x\in [c_0,c_2]_{i_2}} [x,y(x)]_{i_1},
\]
where $[x,y(x)]_{i_1}$ corresponds to the image of $[c_0,c_1]_{i_1}$ in $W^{i_1}(x)$ by the $\mathcal{F}^{i_2}$-holonomy. Having defined $R_{k-1}$ and given $[c_0,c_k]_{i_k}\subset W^{i_k}(c_0),$ $i_k\neq i_{k-1},$ define $R_k$ by sliding $R_{k-1}$ along $[c_0,c_k]_{i_k}$,
\[
R_k=\bigcup_{x\in [c_0,c_k]_{i_k}} R_x^{k-1},
\]
where $ R_x^{k-1}$ is (a rectangle of dimension $k-1$ and corner $x$) obtained as the image of $R_{k-1}$ in the corresponding center manifold  by the $\mathcal{F}^{i_k}$-holonomy sending $c_0$ in $x$.  The corners $c_0,\dots ,c_k$ as above define unequivocally the rectangle. 

Note that the order in which the rectangle is constructed is irrelevant. For example,
\[
R_2=\bigcup_{x\in [c_0,c_2]_{i_2}} [x,y(x)]_{i_1}= \bigcup_{z\in [c_0,c_1]_{i_1}} [z,w(z)]_{i_2}
\]
and similarly for higher dimensional rectangles.

We are ready to deduce the following.

\begin{corollary}\label{estructuraclase}\hfill
\begin{enumerate}
	\item If $f$ has simple center, then for every $x\in M$ the class $[x]$ is a rectangle in a single leaf of $\mathcal{F}^c$. Moreover, there exists some $E>0$ independent of $x$ such that for every interval $[z,z']_i\subset [x]$, it holds $\dist_i(z,z')<E$.
	\item If $f$ has strongly simple center, then the same is true for sufficiently small $\mathcal{C}^1$ perturbations $g$ of $f$.
\end{enumerate}	 
\end{corollary}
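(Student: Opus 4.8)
The plan is to combine Lemma~\ref{dentrohoja}, Lemma~\ref{intervalos} and Corollary~\ref{cor.intervalosstrong} with a direct argument that the connectedness and interval-closure properties of $[x]$ force it to be a rectangle. For part (1), fix $x\in M$ and lift to $\tilde{x}$. By Lemma~\ref{dentrohoja} the class $[\tilde x]$ sits inside a single leaf $L=\tW^c(\tilde x)$, which by simplicity is sub-foliated by the one-dimensional foliations $\uF^1,\dots,\uF^\ell$ with the compatibility $S\subset S'\Rightarrow \mathcal F^S$ sub-foliates $\mathcal F^{S'}$. The key closure property is Lemma~\ref{intervalos}: if $z,z'\in[\tilde x]$ lie on a common $\uF^i$-leaf then the whole interval $[z,z']_i$ lies in $[\tilde x]$. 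So the first step is to show $[\tilde x]$ is connected: since $[\tilde x]$ is compact (Remark~\ref{rmk.uniformetama}) and the $\uF^i$ give $L$ a product-type coordinate system locally, any two points of $[\tilde x]$ that are close can be joined by a short concatenation of $\uF^i$-segments inside $L$; one then needs each such segment to lie in $[\tilde x]$. Here is where I would be careful: Lemma~\ref{intervalos} gives closure only for segments whose two endpoints are both in $[\tilde x]$, so I cannot immediately slide. The natural move is to work inside the leaf of the complementary foliation $\mathcal F^{\{1,\dots,\hat i,\dots,\ell\}}$ and use the GPS in the center leaf (strong simplicity, condition (c)) together with the uniform bound from Corollary~\ref{estructuraclase}'s companion Lemma~\ref{lem.weakqi} to produce, from any two points of $[\tilde x]$, a canonical ``grid'' of intermediate points, each lying in $[\tilde x]$, and then invoke Lemma~\ref{intervalos} edge by edge.

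Concretely, I would argue by induction on $\ell$ (or on the dimension $k$ of the ambient rectangle structure). Pick $z_0\in[\tilde x]$ as a base corner. For any other $z\in[\tilde x]$, use GPS inside $L$ between $\uF^i$ and the complementary foliation to write $z$ as the endpoint of a unique chain of one-dimensional holonomy moves starting at $z_0$: first slide along $\uF^{i_1}$, then $\uF^{i_2}$, etc. The intermediate vertices of this chain are obtained as intersections $\tW^{i_j}(\cdot)\cap \tW^{\{\dots\}}(\cdot)$, and each such intersection point must itself be in $[\tilde x]$ — this is the crux, and I would prove it by the same accumulation/contradiction scheme used in Lemmas~\ref{dentrohoja} and \ref{intervalos}: if an intermediate vertex $w$ were not in $[\tilde x]$, then $\tilde h(w)\neq \tilde h(z_0)$, so $A^n\tilde h(w)$ and $A^n\tilde h(z_0)$ separate, forcing some iterate of the relevant center-interval through $w$ to grow, while Lemma~\ref{lem.weakqi} keeps it inside a fixed-size center plaque, contradicting the one-intersection-point property of $\uF^{i}$ versus $\uF^{\{1,\dots,\hat i,\dots,\ell\}}$ inside the center leaf. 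Once every vertex of the chain is known to lie in $[\tilde x]$, Lemma~\ref{intervalos} upgrades every edge of the chain to lie in $[\tilde x]$, and the union over all $z$ of these chains is exactly the rectangle with corners $c_0=z_0,c_1,\dots,c_\ell$ determined by the ``extremal'' positions reached in each direction (here compactness of $[\tilde x]$ guarantees the extremal positions exist). The uniform size bound $\dist_i(z,z')<E$ for every edge $[z,z']_i\subset[x]$ follows from Lemma~\ref{lem.weakqi} applied with $K$ the diameter bound of Remark~\ref{rmk.uniformetama}: any such interval lies in $\tW^c(z,\tilde L(K))$, and the intrinsic $\uF^i$-length of a sub-interval of a center plaque of uniformly bounded size is itself uniformly bounded (using that the $\uF^i$ sub-foliate $\uFc$ with plaques depending continuously on the base point, hence with uniformly bounded geometry on the compact quotient).

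For part (2), everything is designed to be robust. By Corollary~\ref{cor.intervalosstrong}, if $g$ is $\mathcal C^1$-close to $f$ and has simple center, the interval-closure conclusion of Lemma~\ref{intervalos} holds for $g$ on a fixed neighborhood $\mathcal U$. By the proof of Lemma~\ref{dentrohoja} (last paragraph there), $[\tilde x]_g$ still lies in a single center leaf of $\uFc_g$, using Remark~\ref{rmk.uniformetama} for the uniform diameter bound and \eqref{eq.depcontinuag} for GPS of the lifted $cs,u$ and $cu,s$ foliations of $g$ on the relevant scale. The GPS-inside-center-leaves property (strong simplicity's condition (c)) for $g$ is exactly what I would need for the chain-construction above; here I would note that condition (c) is being assumed as part of ``$f$ has strongly simple center'' and that the relevant GPS statements we actually invoke are the uniform ones of Lemma~\ref{lem.weakqi}, which by the remark following its proof persist under $\mathcal C^1$ perturbation when $E^c_f$ is dominated — hence the earlier convention that $g$ has simple center, together with the domination, lets Lemma~\ref{lem.weakqi} and its consequences transfer to $g$. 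Then the same inductive argument as in (1), now with $f$ replaced by $g$ throughout and all constants taken uniform over $\mathcal U$, shows $[x]_g$ is a rectangle with the uniform edge bound $E$ (which can be chosen uniform over $\mathcal U$ as well). The main obstacle, as indicated, is the middle step: proving that the intermediate holonomy vertices of the chain joining two points of $[x]$ are themselves in $[x]$; this is not a formal consequence of Lemmas~\ref{dentrohoja}–\ref{intervalos} and requires re-running the growth/accumulation contradiction one more time, carefully using the GPS inside center leaves and the uniform plaque bound of Lemma~\ref{lem.weakqi}.
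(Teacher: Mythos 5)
Your proposal follows essentially the same route as the paper: the paper also reduces everything to showing that the holonomy intersection point $w=W^{i_1}(z')\cap W^{i_2}(z)$ of two points of $[x]$ again lies in $[x]$ (via a uniform bound on $\dist_{i_1}(z',w),\dist_{i_2}(z,w)$ proved by the compactness-plus-GPS scheme of Lemma~\ref{lem.interseccionesuniformes}), then fills in the rectangle edge by edge with Lemma~\ref{intervalos} and inducts on the number of directions, with part (2) obtained by the robustness arguments of Lemma~\ref{dentrohoja} and Corollary~\ref{cor.intervalosstrong}. The only slight difference is that the paper derives the uniform edge bound $E$ from a separate claim modeled on Lemma~\ref{lem.interseccionesuniformes} rather than from Lemma~\ref{lem.weakqi}, but the ingredients (compactness of $M$ and GPS inside the center leaves) are the same ones you invoke.
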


\begin{proof}
Let $z, z'\in[x]$. If $z\in W^{i_1}(z')$, then by Lemma \ref{intervalos}, we have that $R_1=[z,z']_{i_1}$ is contained in $[x]$. So, let us suppose that $z\not\in W^{i_1}(z')$, and denote by $W^{i_2}(z)$ the one dimensional center foliation such that $W^{i_1}(z')\cap  W^{i_2}(z)\neq \emptyset$. Let us denote by $w$ the point of intersection. We claim that  $w\in [x],$ furthermore, $R_2=\bigcup_{x\in [w, z]_{i_2}} [x,y(x)]_{i_1}$ is contained in $[x].$ 

\noindent\textbf{Claim:} there exists $E>0$ such that for $z,z' \in M, z'\in B(x,L)$ and $w=W^{i_1}(z')\cap  W^{i_2}(z)$  then 
\[
	\dist_{i_1}(z',w), \dist_{i_2}(z,w)\leq E.
\]
The proof is completely analogous to Lemma \ref{lem.interseccionesuniformes}, using compactness of $M$ and GPS inside $\F^{\{i_1,i_2\}}$.

\smallskip 
 
It follows as before that $w\in [x]$. Similarly, we get that $w'=W^{i_1}(z)\cap  W^{i_2}(z')\in [x]$. Using the previous Lemma we deduce that the rectangle $R_{2}$ of vertices $z,z',w,w'$ is completely contained in $[x]$. Note that the length of every interval $J\subset R_2$ inside $\F^{i_1}$ or  $\F^{i_2}$ has intrinsic length less than $E$.

Proceeding by induction, we have that if $w_{k-1}\in [x]\setminus R_{k-1},$ then there is $i_k$
such that the rectangle
\[R_k=\bigcup_{x\in [w, w_{k-1}]_{i_k}} R_x^{k-1},\]
is contained in $[x],$ where  $R_x^{k-1}$ is the image of $R_{k-1}$ in the corresponding center manifold by the $\mathcal{W}^{i_k}$-holonomy sending $w$ in $x$. The proof of the first part of the Lemma is complete.

The second part follows using continuous dependence of the foliations $\F^S_g$ on $g$ in the case when $f$ has strongly simple center, by arguing as in Lemma \ref{dentrohoja}.
\end{proof}

\smallskip

The previous Corollary shows that the structure of the classes $[x]$ is more delicate than the one-dimensional center case. We will use this structure in the next section to compute the entropy of $f$ inside a given class $[x]$ and show that it is equal to zero (see Theorem ~\ref{boundedgeo}). As a result, we deduce from \eqref{eq:Bowen} that
\[
\htop(A)\leq h_{\textrm{top}}(f)\leq \htop(A)+\sup_{x\in M}\{\htop(f,[x])\}=\htop(A).
\]
This establishes Theorem \ref{teo:A}.

%%%%%%%%%%%%%%%%%%%%%%%%%%%%%%%%%%%%%%%%%%%%%%%%%%%%%%%%%%%%%%%%%%%%%%%%%%%%%%%%%%%%%%%%%%%%%%%%%%%%%%%%%%%%%%%%%%%%%%%%%%%%%%%%%%%%%%%%%%%%%%%%%

\section{Bounded Geometry}\label{sec:BoundedGeom}

In this section we will  prove that $\htop(f,[x])=0$, for every $x\in M$. 

We will use the structure of the pre-image classes studied in Section~\ref{sec:structure}  and  we are going to  see such classes in a more general context. Let $X$ be a (not necessarily closed) Riemannian manifold, and consider an homeomorphism $f:X\rightarrow X$.

\begin{definition}
	We say that a $k$-dimensional compact subset $G\subset X$ is a geometrical wiring if there exist $1$-dimensional foliations $W^1,\ldots,W^k$ by compact leaves and $x\in G$ such that defining
		\[
		D_1:=W^1(x),\quad D_{i+1}:=\bigcup_{y\in D_i}W^{i+1}(y),\ 1\leq i\leq k-1,
		\]
		it holds
\begin{enumerate}
\item[(i)] $D_k=S$; and,
\item[(ii)] for every $i$, if $y\in D_i$, then $W^{i+1}(y)\cap D_i=\{y\}$.		
\end{enumerate} 	

\end{definition}

Follows from $(ii)$ above, that if $G\subset X$ is a geometrical wiring, then there exists a well defined projection $\pi_i^G:D_{i+1}\rightarrow D_i$.	

Assuming that $f^n(G)\subseteq X$ is a geometrical wiring, for every integer $n\geq 0$, we say that the corresponding foliations $W^1_{f^n(G)},\ldots,W^k_{f^n(G)}$ are $f$-invariant if 
$$
f(W^i_{f^n(G)}(x))=W^i_{f^{n+1}(G)}(fx), \quad\mbox{ for every } x\in G, i=1,\dots, k,\, n\geq 0.
$$ 
It follows that  $\{W^1_{f^n(G)},\ldots,W^k_{f^n(G)}:n\geq 0\}$ induce well defined $f$-invariant foliations  $W^1,\ldots, W^k$ on 
$$G_{\infty}:=\bigcup_{n\geq 0} f^n(G).$$

\begin{definition}
We say that a $k$-dimensional subset $G\subset X$ has \emph{bounded geometry} with respect to $f$ if the following conditions hold:
\begin{enumerate}[leftmargin=1cm]
\item[\referencia{BGuno}{\rm[BG1]}]  for every $n$, the set $f^n(G)$ is a geometrical wiring, and the corresponding foliations $W^1_{f^n(G)},\ldots,W^k_{f^n(G)}$ are $f$-invariant; 
\item[\referencia{BGdos}{\rm[BG2]}] the foliations  $W^1,\ldots, W^k$ are uniformly continuous;
\item[\referencia{BGtres}{\rm[BG3]}] the family $\{f:W^i(y)\rightarrow W^i(fy):y\in G_{\infty}\}$ is uniformly Lipschitz;
\item[\referencia{BGcuatro}{\rm[BG4]}] for every $x\in G_{\infty},$ there exists a parametrization $h^i_x:[0,1]\rightarrow W^i(x)$ such that the family  
\[
\mathcal{P}:=\{h^i_x:[0,1]\rightarrow W^i(x):1\leq i\leq k,x\in G_{\infty}\}
\]
is uniformly bi-Lipschitz.
\end{enumerate}	
\end{definition} 

\begin{remark}\label{rmk2}
Note that for a set as above, there exists $L>0$ such that for every $1\leq i\leq k,x\in S_{\infty}$, it holds $\mathrm{length}(W^i(x))<L$.	
\end{remark}

For $f$ as in Theorem \ref{teo:A} and for every $x\in M$, the class $[x]=h^{-1}(x)$ has bounded geometry with respect to $f$, as follows follows directly from Corollary \ref{estructuraclase}. 

\medskip

The main result of this section is the following, which implies in particular that for $x\in M$ holds $\htop(f,[x])=0$.

\begin{theorem}\label{boundedgeo}
If $G$ has bounded geometry with respect to $f$, then $\htop(f,G)=0$.	
\end{theorem}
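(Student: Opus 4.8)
The plan is to estimate the number of $(n,\epsilon)$-separated points inside $G$ and show this number grows sub-exponentially. The key structural input is that $G$ is a $k$-dimensional ``geometrical wiring'': it is swept out by $k$ mutually transverse one-dimensional foliations with uniformly bounded leaf length (Remark \ref{rmk2}), and — crucially — by \referencia{BGtres}{[BG3]} and \referencia{BGcuatro}{[BG4]}, the restriction of $f$ to each leaf $W^i(y)$ is uniformly bi-Lipschitz with a constant $C\geq 1$ \emph{that does not depend on $n$}. First I would reduce the counting of $(n,\epsilon)$-separated sets in $G$ to counting along the wiring coordinates: using the uniform continuity of the $W^i$ \referencia{BGdos}{[BG2]} and the projections $\pi^G_i$, one sees that if two points of $G$ lie in distinct $W^i$-leaves that are $\delta$-far in the transverse direction for every $i$, then they are $(1,\epsilon_0)$-separated for a suitable $\epsilon_0=\epsilon_0(\delta)$; conversely, two points in the same leaf can only be $(n,\epsilon)$-separated if their intrinsic distance along that leaf, \emph{after $n$ iterates}, exceeds $\epsilon$.

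The heart of the argument is then a one-dimensional estimate: fix a leaf $W^i(x)$ of length $<L$, parametrized bi-Lipschitzly by $[0,1]$ via $\mathcal{P}$. Since $f$ maps $W^i(f^jx)$ to $W^i(f^{j+1}x)$ with uniform bi-Lipschitz constant $C$, the whole orbit segment $W^i(x),W^i(fx),\dots,W^i(f^{n-1}x)$ is covered by maps that distort intrinsic distance by at most a factor $C$ at each step; but the leaves never leave the bounded-length regime (their lengths stay below $L$), so there is no room for exponential expansion. Concretely, an $(n,\epsilon)$-separated subset of a single leaf $W^i(x)$ has cardinality bounded by roughly $L/(\epsilon \lambda_{\min}^{\,n})$ where $\lambda_{\min}$ controls how much $f^{-1}$ can contract along the leaf — and here is where I must be careful: a priori $f$ could shrink a leaf, so that $\epsilon$-separation at time $n$ requires initial separation $\sim \epsilon\cdot(\text{contraction}^{-n})$, which would be exponentially small and give exponential entropy. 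So the real point must be that on a \emph{compact} leaf of bounded length that is carried to a leaf of bounded length, the map cannot be uniformly contracting — or more precisely, that the total distortion accumulated is polynomial, not exponential: since all leaves have length in a fixed band $[\ell_0, L]$ (using that they are compact, hence of positive length, and invariant), the ratio $\mathrm{length}(f^n W^i(x))/\mathrm{length}(W^i(x))$ stays in a bounded interval, so $f^n$ restricted to the leaf distorts lengths by a bounded factor \emph{independent of $n$}, whence an $(n,\epsilon)$-separated set in the leaf has cardinality $O(1/\epsilon)$ uniformly in $n$.

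Granting the one-dimensional bound, I would assemble the global estimate by induction on the dimension $k$ of the wiring, using the fibered structure $D_{i+1}=\bigcup_{y\in D_i}W^{i+1}(y)$ and the projections $\pi^G_i$: an $(n,\epsilon)$-separated set in $D_{i+1}$ projects to an $(n,\epsilon')$-separated set in $D_i$ (by uniform continuity of $\pi^G_i$, which holds because the foliations are uniformly continuous), with fibers that are $(n,\epsilon')$-separated inside single leaves of $W^{i+1}$, hence of cardinality $O(1/\epsilon')$ by the base case. Multiplying through the $k$ coordinates gives $s(n,\epsilon,G)\leq \big(\mathrm{const}\cdot g(\epsilon)\big)^k$ with no dependence on $n$, so $\tfrac1n\log s(n,\epsilon,G)\to 0$ and $\htop(f,G)=0$ by \eqref{eq:topent2}.

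The main obstacle, as flagged above, is controlling the distortion of $f^n$ along the one-dimensional leaves uniformly in $n$: \referencia{BGtres}{[BG3]} only gives a one-step Lipschitz bound, and naively iterating it yields a constant $C^n$ that is exponential. The resolution has to exploit that the leaves are compact with lengths trapped in a fixed band (infinitely many leaves occur as $f^nW^i(x)$, and by Remark \ref{rmk2} and uniform bi-Lipschitz parametrization \referencia{BGcuatro}{[BG4]} their lengths lie in $[\ell_0,L]$ for some $\ell_0>0$), so that length expansion over any number of iterates is pinched between $\ell_0/L$ and $L/\ell_0$; this, together with \referencia{BGcuatro}{[BG4]} to convert length control into pointwise distance control, is what kills the exponential growth. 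Verifying this pinching carefully — and checking that the $\epsilon$-dependence in the inductive step does not degenerate after $k$ applications — is where the work lies, but the geometry is rigid enough that the bounds go through.
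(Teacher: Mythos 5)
Your overall architecture (induction on the dimension of the wiring, a one-dimensional estimate at the base, fibered counting via the projections) is the same as the paper's, but both of your key steps have genuine gaps. First, the one-dimensional estimate: you argue that because the lengths of all leaves $f^nW^i(x)$ are pinched in a band $[\ell_0,L]$, the map $f^n$ restricted to a leaf ``distorts lengths by a bounded factor independent of $n$,'' so that an $(n,\epsilon)$-separated set in a leaf has cardinality $O(1/\epsilon)$ uniformly in $n$. This inference is false: a bounded ratio of \emph{total} lengths says nothing about pointwise distortion — $f^n$ can expand some subintervals enormously while contracting others, and \ref{BGtres} only gives $C^n$ when iterated, exactly as you feared. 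The correct argument does not control distortion at all; it uses the \emph{order structure} of the interval: if $x_1<\dots<x_N$ is $(n,\epsilon)$-separated, each consecutive gap must have image of length $>\epsilon$ at some time $j\le n-1$, and for fixed $j$ at most $L/\epsilon$ disjoint subintervals of $f^j(I)$ can have length $>\epsilon$; summing over $j$ gives $N(n,\epsilon,I)\le n\bigl(\tfrac{L}{\epsilon}+1\bigr)$. This bound is linear in $n$ — not constant, as you claim — but that is enough for zero entropy, and it is what the paper uses (its inequality \eqref{unifN}).

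Second, the inductive step: your assertion that an $(n,\epsilon)$-separated set in $D_{i+1}$ \emph{projects} to an $(n,\epsilon')$-separated set in $D_i$ is not true — distinct separated points can project to the same point of the base, so no lower bound on the separation of the projections follows from uniform continuity of $\pi^G_i$. The correct fibered count (as in Bowen's Theorem 17, which the paper adapts) goes the other way: one takes a $(p,\delta)$-\emph{spanning} set in the base, covers $G$ by sets of the form (base ball) $\times$ (dynamical ball in the fiber), and shows each such set meets a separated set in at most one point. Moreover, because $G$ is not $f$-invariant, the fibration lives over the moving sets $f^n(G_0)$, and one must verify that the transverse scale does not degenerate along the orbit — this is precisely the content of the paper's Claims 1 and 2 and the uniform Lebesgue number $\delta(\epsilon)$ of the coverings $\mathcal{V}_n$, together with the block decomposition of the time interval into segments of length $m(\epsilon,r)$. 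Your proposal treats the problem as a static product of intervals and does not engage with either the non-invariance of $G$ or the failure of projections to preserve separation, so as written the argument does not close.
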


\begin{proof} We fix then a set $S$ satisfying the hypotheses, and proceed  with a proof by induction on $k=\dim G$. 

By Remark \ref{rmk2}, for any interval $I$ inside $G$, for every $n\in \mathbb{N}$ it holds 
\[\mathrm{length}\big(f^n(I)\big)\leq L,
\] 
where $L$ is a uniform constant not depending on $n$ and $I$.  A classical argument then implies that 
\begin{equation}\label{unifN}
N(n,\epsilon,I)\leq n\left(\frac{L}{\epsilon}+1\right),	
\end{equation}
which in turn implies $\htop(f,I)=0$. This takes care of the base case $k=1$. 

For the inductive step,  we consider the $k$-dimensional rectangle with bounded geometry
\[
G=\bigcup_{z\in S_0}W^{k}(z), 
\] 
where $G_0$ be a $(k-1)$-dimensional rectangle with bounded geometry  such that $\htop(f,G_0)=0$.
For every integer $n\geq 0$ we denote $\pi_n: f^n(G)\rightarrow f^n(G_0)$ the corresponding projection. Thus, we can write for $n\geq 0$,
\[
f^n(G)=\bigcup_{z\in f^n(G_0)}W^k(z)=\bigcup_{z\in f^n(G_0)} J_z, 
\]
where $J_z$ is either a circle or a closed interval. Fix $\epsilon, r>0$. By \eqref{unifN} there exists $m=m(\epsilon,r)\in\mathbb{N}$ such that for every $z\in \cup_{n\geq0}f^n(G_0)$,  
\[
\frac{1}{m}\log N(m,\epsilon,J_z)<r.
\]
On each interval $J_z$ we consider $E_z$ an $(m,\epsilon)$-spanning set of $f|J_z$ with minimal cardinality and define the relative open set in $S$,
\[
U_z=\bigcup_{w\in E_z} B(w,m,3\epsilon).
\]

\smallskip

\noindent\textbf{Claim 1:} For $\epsilon>0$ sufficiently small, there exists $\eta>0$ such that for every $z\in \cup_{n\geq0} f^n(G_{0}),$ $w\in J_z, i\in\{1,\ldots,k-1\}$, the length of the connected component of  $B(w,m,\epsilon)\cap W^i(w)$ containing $w$ is greater than $ \eta$.

\begin{proof}[Proof of Claim 1]
Note that $B(w,m,\epsilon)=\bigcap_{j=0}^m f^{-j}B(f^j(w),\epsilon)$ and use \ref{BGdos}, \ref{BGtres}.  	
\end{proof}

For $z\in f^n(G_0)$, let $D_z\subset f^n(S_0)$ be the maximal (relatively) open disc centered at $z$ with the property $\pi^{-1}_n(D_z)\subset U_z$. 

\smallskip

\noindent\textbf{Claim 2:} We have that
\[
\inf\{\diam D_z:z\in \bigcup_{n\geq0}f^n(G_0)\}>0.
\]
\begin{proof}[Proof of Claim 2]
This follows from Claim 1, \ref{BGdos} and due to the fact that the cardinality $\#E_z$ is uniformly bounded independently of $z$ (recall that $E_z$ is $(m,\epsilon)$-spanning).	
\end{proof}

Using Claim 2 and \ref{BGcuatro} it is not difficult to deduce the existence of finite coverings $\mathcal{V}_n$ of $f^n(G_0)$ with the properties
\[
\delta(\epsilon):=\inf_{n\geq 0}\{\text{Lebesgue number of }\mathcal{V}_n\}>0, \quad \delta(\epsilon)\xrightarrow[\epsilon\mapsto0]{}0.
\]

Fix $p\in \mathbb{N}$ and take $F_p\subset G_0$ a $(p,\delta)$-spanning set  with minimal number of elements. Consider $q\in\mathbb{N}$ such that $q\cdot m< p\leq (q+1)\cdot m$. If $z\in F_p,$ then for every $0\leq j\leq p-1$ there exists $c_j(z)\in f^j(G_0)$ such that $B^{k-1}(c_j(z);\frac{\delta}{2})\subset D_{c_j(z)}$ and $D_{c_j(z)}\in\mathcal{V}_j$.

A list $(z,w_1,\ldots, w_{q})\in F_p\times J_{c_m(z)}\times J_{c_{2m}(z)}\cdots\times J_{c_{qm}(z)}$ will be called \emph{admissible}. For such a list define 
\begin{align*}
V(z,w_1,\ldots, w_q):=\{y\in G: \dist(f^{j+m}y,f^jw_s)<3\epsilon, \, 0\leq j\leq m, \,1\leq s\leq q\}	
\end{align*}
and observe that $\{V(z,w_1,\ldots, w_{q}): (z,w_1,\ldots, w_{q})\text{ is admissible}\}$ is an open covering of $G_{\infty}$. Note the following.

\smallskip

\noindent\textbf{Claim 3:} If $A\subset G$ is $(p,6\epsilon)$-separated and $(z,w_1,\ldots, w_{q})$ is admissible, then the cardinality  

$$\#\left(A\cap V(z,w_1,\ldots, w_{q})\right)\leq 1.$$

It follows then that, denoting by $M_z$ the number of possible admissible sequences of the form $(z,w_1,\ldots, w_{q})$, it holds 
\begin{equation}
s(p,6\epsilon,G)\leq \#F_p\cdot \max_{z\in F_p} M_z.	
\end{equation} 
On the other hand we have by the choice of $m$,
\begin{align*}
M_z=\prod_{s=0}^{q} N(m,\epsilon,J_{c_{sm}(z)})\leq \exp(qrm)\leq \exp(pr).
\end{align*}
Using the induction hypothesis we deduce that
\begin{align*}
\limsup_{p\rightarrow\infty} \frac{1}{p}\log s(p,6\epsilon,G)&\leq\limsup_{p\rightarrow\infty}\frac{1}{p}\log \#F_p+\limsup_{p\rightarrow\infty}\frac{1}{p}\log\exp(pr)\\
&\leq \htop(f,G_0)+r=r,
\end{align*}
and thus, $\htop(f,G)\leq r$. Since $r$ is arbitrary, we conclude that $\htop(f,G)=0$, which finishes the inductive step, and hence, the proof of Theorem \ref{boundedgeo}.
\end{proof}

\begin{remark}
The proof above is inspired on \cite[Theorem 17]{B1971}. The difficulty in our setting is that the set $G$ is not invariant, which forces us to contemplate its whole (positive) orbit.
\end{remark}

%%%%%%%%%%%%%%%%%%%%%%%%%%%%%%%%%%%%%%%%%%%%%%%%%%%%%%%%%%%%%%%%%%%%%%%%%

\section{Robustly transitive example}

In this section we exhibit a robustly transitive counter-example as we claimed in Theorem~\ref{teo:B}, showing the importance of the simplicity condition for establishing Theorem A. We point out that a construction of the same type appeared in \cite{RRoldan} without the robustly transitivity property, and with reducible central part.

Consider a totally real Pisot number $\alpha$ and let $A$ be the corresponding companion matrix: then $A:\mathbb{T}^4\to\mathbb{T}^4$ is a linear Anosov diffeomorphism with  decomposition
$$T\mathbb{T}^4=E_A^{sss}\oplus E_A^{ss}\oplus E_A^{s}\oplus E_A^{u},$$
with associated real eigenvalues $|\lambda^{sss}_A|<|\lambda^{ss}_A|<|\lambda^{s}_A|<1<|\lambda^{u}_A|$.
For instance, we can  consider the linear map with matrix
$$A=\left(
\begin{array}{cccr}
0 &0  &0  & -1 \\
1 & 0 &0 &-1\\
0 &1 &0 &10 \\
0 &0 &1 &10
\end{array}
\right),$$
where $\alpha=\lambda^u_A\thickapprox 10.91$, $\lambda^s_A\thickapprox -0.91$ and $\lambda^{1}_A\thickapprox -0.32, \lambda^{2}_A\thickapprox 0.32$. The topological entropy of $A$ is $\htop(A)=\log|\lambda^u_A|$.

Let us consider $p\in \mathbb{T}^4$ a fixed point by $A$ and $B_p$ a small neighborhood of $p$. We proceed to do a deformation in $B_p$ in order that the real eigenvalues associated to the central bundle $E_A^{ss}\oplus E_A^{s}$ become contracting complex eigenvalues, obtaining an isotopic diffeomorphism such that the central bundle is 2-dimensional and does not admit any invariant line sub-bundle. Let us call by  $f_0:\mathbb{T}^4\to\mathbb{T}^4$ the new map with
invariant splitting 
$$T\mathbb{T}^4=E^s_{f_0}\oplus E^{ws}_{f_0}\oplus E^{u}_{f_0},$$
where $E^s_{f_0}$ and $E^u_{f_0}$ are one dimensional and $E^{ws}_{f_0}=E_A^{ss}\oplus E_A^{s}$ (see below for this type of modification). The topological entropy of  $f_0$ is $h_{{\rm top}}(f_0)\geq h_{{\rm top}}(A)$. From now on, let us denote $E^{ws}_{f_0}$ by $E^c_{f_0}$.

Now, around $p$ we deform isotopically  $f_0$ ``\textit{a la Tahzibi-Bronzi}'' (see \cite{BT2010}) into a diffeomorphism $f$ having the following properties

\begin{enumerate}
\item[\referencia{Euno}{[E1]}] $f$ is partially hyperbolic with splitting 
\[
T\mathbb{T}^4=E^s_f\oplus E^{c}_f\oplus E^{u}_f,
\]
where $\dim E^s_f=\dim E^u_f=1$, and $E^c_f=E^c_{f_0}$. Hence, in particular, $Df|E^c_f$ does not admit a non-trivial invariant sub-bundle.
\item[\referencia{Edos}{[E2]}] The topological entropy $\htop(f)\geq \log|\lambda^u_A|+\log 2>\htop(A)$.
\item[\referencia{Etres}{[E3]}] $f$ is robustly transitive.
\end{enumerate}

We proceed as follows. For $r>0$ denote by $\mathbb{D}_r$ the $r$ disc in $\mathbb{R}^2$. As in \cite{BT2010} we construct an isotopy $\{h_t: \mathbb{D}_1\to\mathbb{D}_1\}_{t\in[0,1]}$ satisfying the following
\begin{enumerate}
    \item $h_t(z)=\lambda^{ws}_{f_0} z$, for all $t\in [0,1/2], |z|\geq 1/2$;

    \item $h_t(0)=0$, for all $t\in[0,1]$;
 
    \item $h_0$ is the Smale's horseshoe map (hence, $\htop(h_0)=\log 2$);
 
 	\item $h_t(z)=\lambda^{ws}_{f_0} z$, for all $t\in [1/2,1], z\in \mathbb{D}_1$.

\end{enumerate}

See Figure~\ref{fig:isotopy} for a view of $h_t$ on the disc $\mathbb{D}_1$ and Figure~\ref{fig:isotopyzoom} for a zoom inside the disc $\mathbb{D}_{1/2}$.

\begin{figure}[htbp]
\centering
\subfigure{\includegraphics[width=40mm]{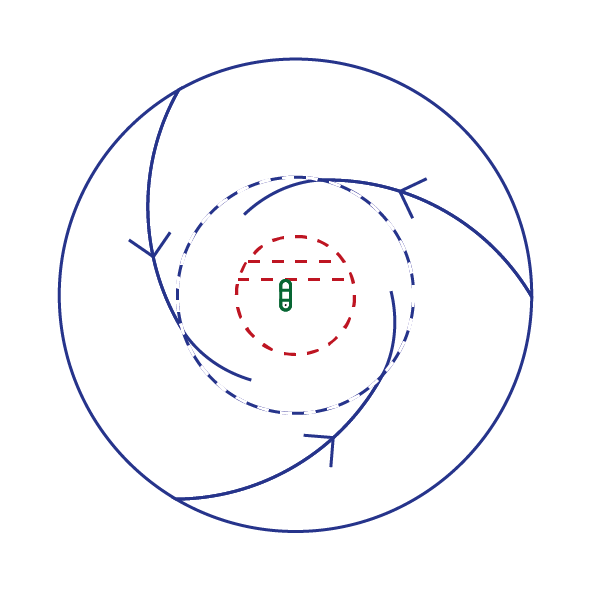}}
\subfigure{\includegraphics[width=40mm]{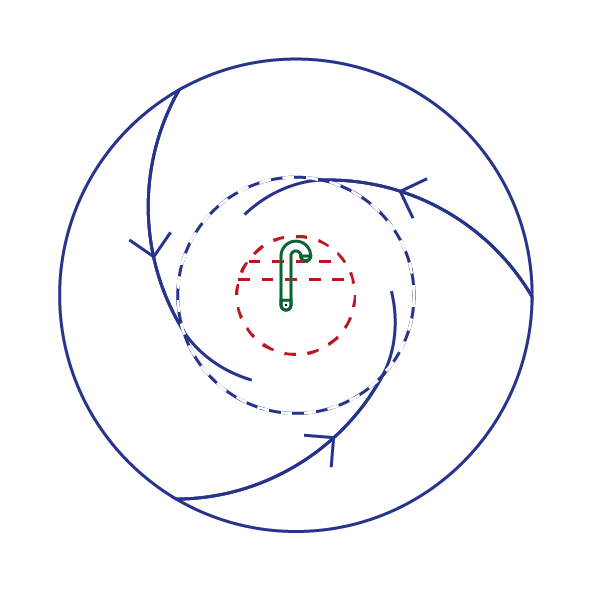}}
\subfigure{\includegraphics[width=40mm]{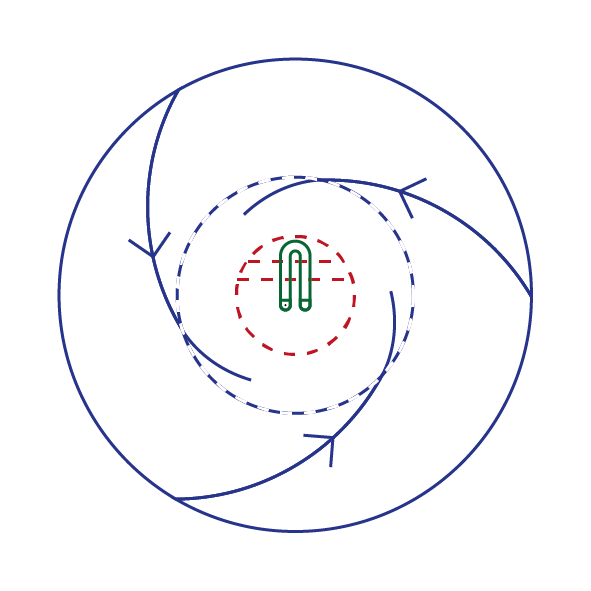}}
\caption{From the left: $h_0$; $h_t,\, 0<t<1/2$; $h_{1/2}$.} \label{fig:isotopy}
\end{figure}

\begin{figure}[htbp]
\centering
\subfigure{\includegraphics[width=40mm]{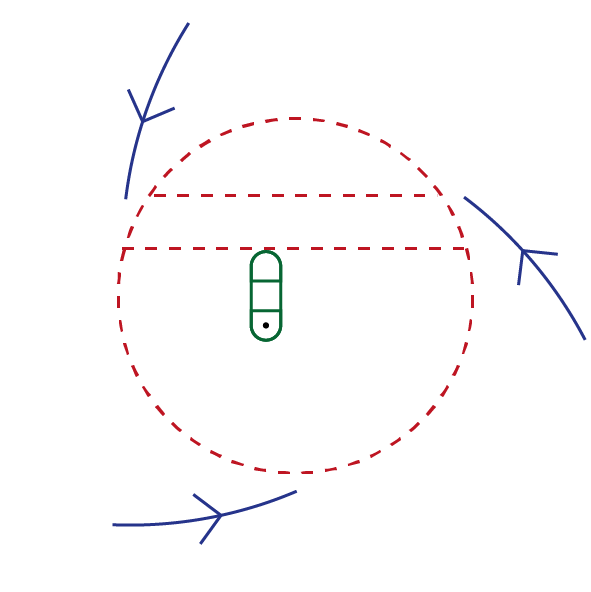}}
\subfigure{\includegraphics[width=40mm]{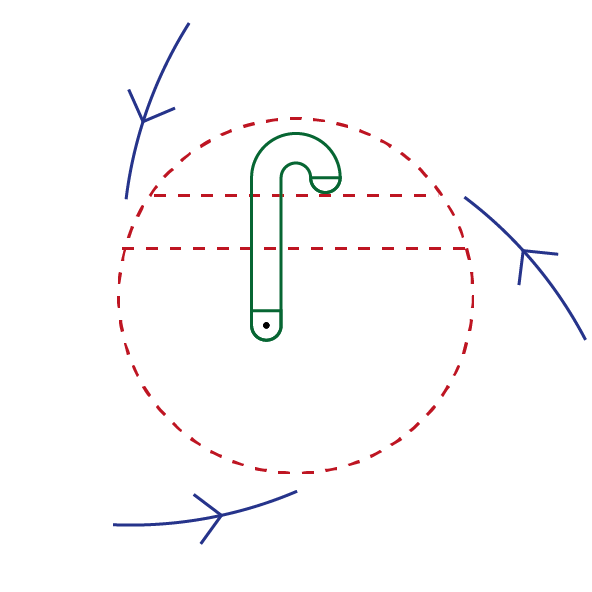}}
\subfigure{\includegraphics[width=40mm]{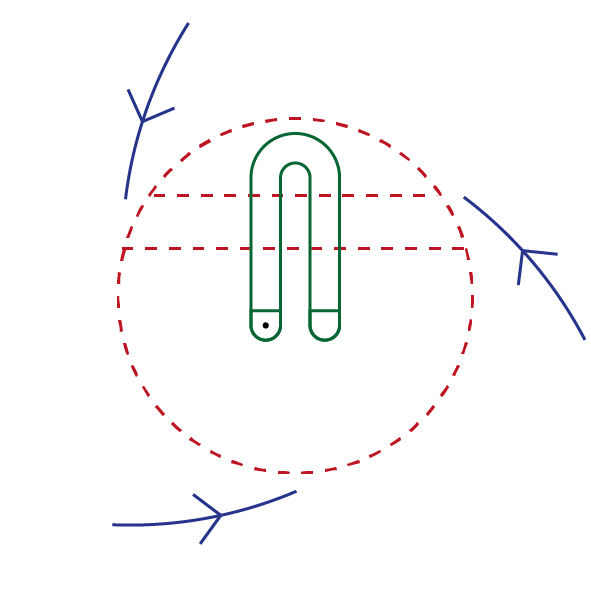}}
\caption{A zoom inside the disc $\mathbb{D}_{1/2}$. } \label{fig:isotopyzoom}
\end{figure}

Consider  $e_t:\mathbb{D}_{t}\to \mathbb{D}_1$ the homothety and define $\widehat{h}_t:\mathbb{D}_{t}\to \mathbb{D}_{t}$ by $\widehat{h}_t=e^{-1}_t\circ h_t\circ e_t$. Since $h_t$ and $e_t$ commute if $|z|\geq t/2$, then we have $\widehat{h}_t(z)=\lambda^{ws}_{f_0}z$ , hence $\widehat{h}_t$ can be extended to $\mathbb{R}^2$ with this formula.  Note also that $\sup_{z\in\mathbb{D}_{t} }\|D\widehat{h}_t\|=\sup_{z\in \mathbb{D}_1}\|Dh_t\|\leq 3$.  Next we fix $\delta>0$ and define $H:\mathbb{R}^4\to \mathbb{R}^4$ by the formula
\[
H(x_1,x_2,x_3,x_4)=\begin{cases}
  (\lambda^s_{f_0}x_1,\widehat{h}_{\frac{x^2_1+x^2_4}{\delta^2}}(x_2,x_3),\lambda^u_{f_0}x_4), &\text{ if } (x_1,x_2,x_3,x_4)\in (-\delta,\delta)^4\\
(\lambda^s_{f_0}x_1,\widehat{h}_1(x_2,x_3),\lambda^u_{f_0}x_4),   & \text{ if } (x_1,x_2,x_3,x_4)\not\in (-\delta,\delta)^4
\end{cases}.
\]

\begin{figure}
  \centering
    \includegraphics{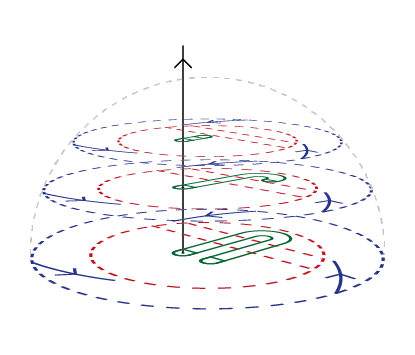}
  \caption{$H$ restricted to $x_1=0$.}
  \label{fig:completa}
\end{figure}

It follows that $H$ is differentiable.  Consider $p=0\in \mathbb{T}^4$ and take a chart $\phi: U\to (-2\delta,2\delta)^4$ around $p$ such that
$\phi \circ f_0\circ \phi^{-1}(\lambda^s_{f_0}x_1,\widehat{h}_1(x_2,x_3),\lambda^u_{f_0}x_4)$.  Finally, define  $f:\mathbb{T}^4
\to \mathbb{T}^4$ by
\[
f(q)=\begin{cases}
f_0(q), & \text{ if } q\not\in U\\
\phi^{-1}\circ H\circ\phi(q), & \text{ if } q\in U
\end{cases}.
\]
By construction, $f$ is isotopic to $f_0$, consequently isotopic to $A$. Let $\pi^{\sigma}: T\mathbb{T}^4\to T\mathbb{T}^4$ the projection into $E^{\sigma}_{f_0}$ for $\sigma\in\{s,u,c\}$ and define the family of cones
\begin{align*}
C^{s}:=\{v\in T\mathbb{T}^4:\|\pi^s(v)\|\geq\|\pi^u(v)+\pi^c(v)\|\},\\
C^{u}:=\{v\in T\mathbb{T}^4:\|\pi^u(v)\|\geq\|\pi^s(v)+\pi^c(v)\|\}.
\end{align*}By choosing $\delta>0$ small, one can guarantee that $D_qf(C^u(q))$ is properly contained in $C^u(fq)$, and similarly $D_qf^{-1}(C^s(fq))$ is properly contained in $C^s(q)$, for every $q\in\mathbb{T}^4$. It follows as in \cite{contributions} that $f$ is partially hyerpbolic, with center bundle $E^c_f=E^c_{f_0}$. This establishes \ref{Euno}. 

Next we tackle \ref{Edos}.

\begin{lemma} It holds $\htop(f)\geq\log|\lambda^u_A|+\log 2$.

\end{lemma}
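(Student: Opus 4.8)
The plan is to exhibit a compact, $f$-invariant (or at least forward-invariant up to a subsystem) set on which the dynamics carries entropy at least $\log|\lambda^u_A|+\log 2$, and then invoke monotonicity of topological entropy under passing to subsystems. Concretely, I would work inside the chart $U$ around $p$ where $f$ is conjugate to $H$. Away from the deformation region the map still expands along $E^u_{f_0}$ with rate $|\lambda^u_A|$, which is where the $\log|\lambda^u_A|$ term comes from; inside the disc $\mathbb{D}_{1/2}$ (in the $(x_2,x_3)$-plane) at the fixed level $x_1=x_4=0$ the map restricts to $\widehat h_0$, a rescaled Smale horseshoe, which contributes an independent $\log 2$. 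The key point is that these two mechanisms act on \emph{transverse} directions and can be combined multiplicatively.

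The steps, in order: (1) First I would isolate the horseshoe. On the slice $\{x_1=x_4=0\}$ the map $H$ restricts to $\widehat h_{0}:\mathbb{D}_{1/2}\to\mathbb{D}_{1/2}$, which (being a homothetic copy of Smale's horseshoe $h_0$) possesses an invariant Cantor set $\Lambda_0$ with $\htop(\widehat h_0|_{\Lambda_0})=\log 2$. However this slice is not $f$-invariant unless $x_1=x_4=0$ is invariant — which it is, since $H$ sends $x_1\mapsto \lambda^s_{f_0}x_1$, $x_4\mapsto\lambda^u_{f_0}x_4$, so $\{x_1=x_4=0\}$ is precisely the local center manifold through $p$, and is $f$-invariant. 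So $\Lambda_0$ is already an $f$-invariant set with entropy $\log 2$; but its unstable direction $E^u_f$ is not being used there. (2) Next I would thicken in the unstable direction. Using the Franks semiconjugacy $h$ with $A$, or more directly the uniform expansion $|\lambda^u_A|$ along $\Fu$ together with the global product structure, one builds a ``product-like'' invariant set whose entropy is the sum: take the orbit segment structure of $A$ restricted to a subshift of entropy $\log|\lambda^u_A|$ and cross it with the horseshoe coding. The cleanest route: show $f$ is semiconjugate over $A$, and on top of a full branch of $A$ one finds, inside each fiber $[x]$... no — the fibers have zero entropy by Theorem \ref{boundedgeo}, so that is the wrong decomposition. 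Instead I would use a direct Markov-partition / horseshoe argument: exhibit a finite collection of disjoint boxes in $\mathbb{T}^4$, each a product of an interval in $E^u$, an interval in $E^s$, and a piece of center, such that $f$ realizes a full shift on $2\cdot\lceil|\lambda^u_A|\rceil$-ish symbols — more robustly, compute $\htop(f)\ge\htop(A)+\log2$ by comparing $(n,\epsilon)$-separated sets: take an $(n,\epsilon)$-separated set $E_A$ for $A$ of size $e^{n(\htop(A)-o(1))}$, lift each point to a point in $\mathbb{T}^4$ via a section of $h$; separately take the $2^n$ distinct itineraries of the horseshoe on the center slice; combine these by using GPS (holonomy along stable/unstable foliations) to place, for each pair, a point of $\mathbb{T}^4$, and check these $\approx e^{n\htop(A)}2^n$ points are $(n,\epsilon')$-separated because the $A$-data separates them along $E^u\oplus E^s$-directions (via $\|\tilde h-Id\|\le K/2$) while the horseshoe data separates them along the center.

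The main obstacle I expect is making the ``combination'' rigorous: the horseshoe lives on the center slice through the single fixed point $p$, whereas the $A$-separated points wander over all of $\mathbb{T}^4$, so one cannot literally multiply the two invariant sets. The honest fix is to make the horseshoe a \emph{relative} horseshoe: show that $f$ admits an invariant set which fibers over a full shift $\Sigma_N$ (with $N$ large, $\frac1n\log N\to\htop(A)$) coming from a Markov partition adapted to $A$ and $f$'s hyperbolic part $E^s\oplus E^u$, and such that over one distinguished symbol (the one containing $p$) the return map has the horseshoe $\widehat h_0$ built into the center direction — then a standard ``inducing/first-return'' entropy estimate gives $\htop(f)\ge \htop(\Sigma_N)+\text{(return-time-weighted)}\log 2$, and letting the partition refine yields $\htop(f)\ge\htop(A)+\log2$. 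Alternatively, and probably what the authors do, one observes that $h_0$ being a horseshoe means $H$ itself (on the full $(x_1,x_2,x_3,x_4)$-box near $p$, using that the $x_1$ and $x_4$ coordinates contribute a hyperbolic product factor) contains a genuine horseshoe of entropy $\log 2$ \emph{on top of} the ambient expansion, so that $\htop(f,U)\ge \log|\lambda^u_A|+\log 2$ directly, by exhibiting the appropriate sub-horseshoe and counting its periodic orbits; the expansion rate $|\lambda^u_A|$ survives the deformation because $\delta$ is chosen small, and the factor $2$ is the horseshoe multiplier. I would present the argument in this last form: identify an explicit compact invariant $\Lambda\subset U$ conjugate to a subshift of finite type whose transition matrix has spectral radius $\ge 2|\lambda^u_A|$, hence $\htop(f)\ge\htop(f|_\Lambda)=\log(2|\lambda^u_A|)=\log|\lambda^u_A|+\log2$.
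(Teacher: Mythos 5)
Your overall strategy --- a $\log 2$ coming from a horseshoe in the center direction near $p$, combined with $\log|\lambda^u_A|$ coming from the unstable expansion --- is the right one, and you correctly isolate the crux: the horseshoe is localized at $p$ while the unstable separation happens all over $\mathbb{T}^4$, so the two entropy sources cannot be naively multiplied. The problem is that the version of the argument you finally commit to does not work. A compact $f$-invariant set $\Lambda\subset U$ cannot carry entropy $\log(2|\lambda^u_A|)$: in the chart, the fourth coordinate evolves by $x_4\mapsto\lambda^u_{f_0}x_4$, so forward invariance inside $U$ forces $x_4=0$, and backward invariance likewise forces $x_1=0$; hence $\Lambda$ lies in the local center slice and $\htop(f|_\Lambda)\leq\log 2$. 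More generally, expansion \emph{transverse} to an invariant set contributes nothing to the entropy of the restriction to that set --- $\htop(f|_\Lambda)$ sees only the dynamics on $\Lambda$, not the normal Lyapunov exponents. So there is no subshift of spectral radius $2|\lambda^u_A|$ hiding in $U$, and ``$\htop(f,U)\geq\log|\lambda^u_A|+\log 2$ directly'' is not available. Your other two routes (a relative horseshoe over a Markov partition; combining $(n,\epsilon)$-separated sets of $A$ with horseshoe itineraries via GPS) stop exactly where the work begins.

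The paper resolves the combination problem differently, and without invoking $\htop(A)$ as such: only the expansion rate along the one-dimensional foliation $\mathcal{F}^u_f$ enters (it equals $\htop(A)$ here only because $E^u_A$ is one-dimensional). One takes a maximal $(n,\epsilon)$-separated set $E(n,\epsilon)$ in the horseshoe and writes it as $f^{-n}E(\epsilon)$, where $E(\epsilon)$ sits on an unstable segment of the horseshoe inside $W^c_f(p)$, so that its points are honestly separated at time zero and their local unstable discs $W^u_f(q;L)$ are pairwise $\epsilon/2$-apart in Hausdorff distance. Inside each such disc one places an $(n,\epsilon)$-separated set of cardinality at least $(\lambda^u_A)^n L/\epsilon$ using the expansion along $E^u_f$, and then pulls the whole configuration back by $f^{-n}$. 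The result is a $(2n,\epsilon/2)$-separated set of cardinality at least $\#E(n,\epsilon)\cdot(\lambda^u_A)^n$: the horseshoe data separates points during the first block of $n$ iterates, the unstable data during the second block. This concatenation in time is the ingredient missing from your write-up. If you reconstruct the argument this way, note that the resulting set is separated over $2n$ iterates rather than $n$, and that one must justify the uniform separation of the unstable discs over the (exponentially many) base points; both points require care.
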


\begin{proof}
Let us fix $\epsilon>0$ and $n\geq 1$. Consider a set $E(n,\epsilon)$ a maximal $(n,\epsilon)$-separated set inside the horseshoe: by hyperbolicity, it is no loss of generality to assume that $E(n,\epsilon)=f^{-n} E(\epsilon)$, where $E(\epsilon)$ is contained in an unstable manifold of the horseshoe inside $W^c_f(p)$. Consider $L>0$ such that for every $q,q'\in E(\epsilon)$ it holds
\[
\textrm{dist}_{\textrm{Haus}}(W^u_f(q;L), W^u_f(q';L))\geq \frac{\epsilon}{2}.
\]
Here $\textrm{dist}_{\textrm{Haus}}$ denotes the Hausdorff distance between the local leaves. For each $q\in E(\epsilon)$ we consider an $(n,\epsilon)$-separated set inside $W^u_f(q;L)$, which we denote by $\tilde{E}(q,n,\epsilon)$.
Note that 
\[
E=\bigcup_{q\in E(\epsilon)} f^{-n}\tilde{E}(q,n,\epsilon)
\]
is an $(2n,\frac{\epsilon}{2})$-separated set, and its cardinality satisfies 
\[
\#E\geq\#E(n,\epsilon)\cdot \min_{q\in E(\epsilon)} \#\tilde{E}(q,n,\epsilon).
\]
It remains to estimate the cardinality of $\tilde{E}(q,n,\epsilon)$: by the definition of $H$, and using that $E^u_f$ is one dimensional and the fact that $W^u_A$ are lines, we deduce that 
\[
\#\tilde{E}(q,n,\epsilon)\geq (\lambda^u_A)^n\left(\frac{L}{\epsilon}\right).
\]
In the end 
\[
\#E\geq \#E(n,\epsilon)(\lambda^u_A)^n\left(\frac{L}{\epsilon}\right),
\]
and then
\[
 \limsup_{n\to\infty}\frac{1}{n} \log \#E\geq \log |\lambda^u_A|+\limsup_{n\to\infty}\frac{1}{n} \log \#E(n,\epsilon)
\]
and since $\displaystyle\lim_{\epsilon\to 0}\limsup_{n\to\infty}\frac{1}{n} \log \#E(n,\epsilon)=\log 2,$ it follows our claim.
\end{proof}

It remains to show \ref{Etres}, that is, $f$ is robustly transitive: equivalently, we will show that $g=f^{-1}$ is robustly transitive. The argument is completely analogous to the one presented in the proof of Theorem B in \cite{contributions}, and is based in the following three facts.
\begin{enumerate}
	\item For every $x\in M$, if $I\subset W^u_g(x)$ is an interval, then there exists $y\in I$ and $n_0$ such that for $n\geq n_0$ it holds $g^n(y)\not\in U$ (the `keep-away lemma', see Lemma 5.2 in \cite{contributions} and compare Lemma A.4.2 in \cite{RodriguezHertz2008}).
	\item $W^c_g$ is minimal. This follows since $W^c_g=W^c_A$.
	\item As a consequence of the above, if $V\subset M$ is open, then there exists $R>0$  such that for every $x\in M$ it holds $W^c(x;R)\cap V\neq \emptyset$. 
\end{enumerate}

In particular $\mathcal{F}^c_g$ is plaque expansive (since it is differentiable, by Theorem 7.2 in \cite{HPS1977}). It follows that all the above properties are robust, namely if $g'$ is $\mathcal{C}^1$ close to $g$ then it satisfies the corresponding $(1),(2)$ and $(3)$ properties (for $(3)$ use Theorem 7.1 in \cite{HPS1977}).

Now take $V_1, V_2$ non-empty open sets and consider $I\subset V_1$ an unstable interval inside  $W^u_{g}$. Pick $y\in I$ satisfying $(1)$ and take $r>0$ such that $W^c(y;r)\subset V_1$. For a set $A$ included in a center leaf denote by 
\[
l(A):=\sup\{\delta>0:\exists z\in A: W^c_g(z;\delta)\subset A\}.
\]
Arguing as in \cite{contributions} page 394 (due to the fact that $Dg|E^c_g$ is expanding outside $U$) one shows that $l(g^n(W^c(y;r)))\xrightarrow[n\to\infty]{}\infty$,  hence for some $n_1$ we have $n\geq n_1$ implies $g^nW^c(y;r)\supset W^c(g^ny;R)$, where $R=R(V_2)$ is given by (3). In particular, $g^n(V_1)\cap V_2\neq \emptyset$. Hence, $g$ (and thus $f$) is robustly transitive\footnote{In fact, robustly topologically mixing.}.

\begin{remark}
It is simple to check that during the isotopy between $f$ and $A$ all maps are partially hyperbolic. It follows that $\uFcs,\uFu$ and $\uFcu, \uFcs$ have GPS \cite{Fisher2014}.
\end{remark}

\medskip

\section{Acknowledgments}

We are very indebted to the referees, in particular one of them pointed out the necessity of a more careful argument in the proof of Corollary \ref{estructuraclase}; while trying to address this problem we were able to remove several conditions appearing in the previous version, and present a cleaner Theorem \ref{teo:A}. The authors would also like to thank Rafael Potrie for his very useful feedback. 

\medskip

\section{Appendix: proof of Theorem \AE}

Since $f$ is a DA we consider the Franks' semi-conjugacy $h:\mathbb{T}^2\to\mathbb{T}^2$ such that $h\circ f=A\circ h$. As explained at the end of Section 3, to establish the equality $\htop(f) = \htop(A)$ it suffices to show that for every $x\in \mathbb{T}^2$, $\htop(f,h^{-1}(x))=0$.

We are assuming that each $f_t$ preserves an splitting of the form $T\mathbb{T}^2=E^c_t\oplus E^u_t$ where $E^u_t$ is uniformly expanded by the action of $Df_t$. Let $\Fu_t=\{W^u_1(x):x\in\mathbb{T}^2\}$ the invariant foliation tangent to $E^u_1$ given by the classical stable manifold theorem.

On the other hand, in Section 4 of  \cite{Potrie15}  it is proven that under the current hypotheses, the bundle $E^c_1$ is also integrable to an invariant foliation  $\Fc_1=\{W^c_1(x):x\in\mathbb{T}^2\}$. Lift everything to $\mathbb{R}^2$ and to simplify denote the lifted objects with the same letters. The classical Poincar\'e-Bendixon theorem implies that each $W^c_t(x)$ is homeomorphic to line: otherwise, there would be a singularity of $\Fu_1$, which is absurd. It also follows, again by \cite{Potrie15} that the lifted foliations are (quasi-isometric and) for every $x\in \mathbb{R}^2$ there exist an homeomorphism  between $W^u_1(x)\times W^c_1(x)$ sending the horizontal lines to $\Fu_1$ and the vertical lines to $\Fc_1$.

But then, proceeding exactly as in the proof of Theorem \ref{teo:A} we deduce that for every $x\in \mathbb{R}^2$ the set $h^{-1}(x)$ is an interval $I$ contained in a leaf $W^c_1(p)$, and such that $\{f^nI\}_n$ has uniformly bounded length. This implies that $\htop(f,I)=0$, finishing the proof.

\bibliographystyle{siam}
\bibliography{refcpec}
\end{document}